\theoremstyle{plain}
\newtheorem{theorem}{Theorem}[section]
\newtheorem*{theorem*}{Theorem}
\newtheorem{lemma}{Lemma}[section]
\theoremstyle{definition}
\newtheorem*{definition*}{Definition}
\theoremstyle{remark}
\newtheorem*{remark*}{Remark}
\numberwithin{equation}{section}
\begin{document}
\raggedbottom 

\title[Long strings of composite values of polynomials]{Long strings of composite values of polynomials and a basis of order 2}

\author{Artyom Radomskii}

\begin{abstract} We show that for any polynomial $f: \mathbb{Z}\to \mathbb{Z}$ with positive leading coefficient and irreducible over $\mathbb{Q}$, if $N$ is large enough then there are two strings of consecutive positive integers $I_{1}=\{n_1-m,\ldots, n_1+m\}$ and $I_{2}=\{n_2-m, \ldots, n_2+m\}$, such that $m = [(\log N) (\log \log N)^{1/325565}]$, $I_{1}\cup I_{2} \subset [1, N]$, $N = n_1 + n_2$,  and $f(n)$ is composite for any $n\in I_{1}\cup I_{2}$. This extends the result in \cite{Gabdullin.Radomskii} which showed the same result but with $f(n)=n$.
\end{abstract}

 \address{HSE University, Moscow, Russian Federation}

\keywords{Gaps, prime values of polynomials, sieves}

\email{artyom.radomskii@mail.ru}

\maketitle

\section{Introduction}

Set
\[
C(\rho):= \sup\Big\{\delta\in \Big(0,\frac{1}{2}\Big) : \frac{6\cdot 10^{2\delta}}{\log (1/ (2\delta))}<\rho\Big\}.
\]

Our main result is the following

\begin{theorem}\label{T1}
Let $f: \mathbb{Z}\to \mathbb{Z}$ be a polynomial of degree $B\geq 1$ with positive leading coefficient and irreducible over $\mathbb{Q}$. Let $0< \delta < C(1/2)$. Then for every sufficiently large positive integer $N$ there are two strings of consecutive positive integers $I_{1}=\{n_1-m,\ldots, n_1+m\}$ and $I_{2}=\{n_2-m, \ldots, n_2+m\}$, such that $m = [(\log N) (\log \log N)^{\delta}]$, $I_{1}\cup I_{2} \subset [1, N]$, $N = n_1 + n_2$,  and $f(n)$ is composite for any $n\in I_{1}\cup I_{2}$.
\end{theorem}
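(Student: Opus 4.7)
The plan is to adapt the Erd\H{o}s--Rankin / Maier--Pintz style sieve construction of \cite{Gabdullin.Radomskii} to the polynomial setting. The key substitution is to replace, at every stage, the condition ``$p$ divides $n$'' by the condition ``$p$ divides $f(n)$''. For a prime $p$ not dividing the leading coefficient or the discriminant of $f$, and for any root $a_p$ of $f$ modulo $p$, the congruence $n \equiv a_p \pmod{p}$ forces $p \mid f(n)$. The set $\mathcal{P}_f$ of primes for which $f$ has at least one root modulo $p$ contains a positive proportion of all primes (by Frobenius--Chebotarev applied to the splitting field of $f$), and this positive density is precisely what should allow the construction to yield the same quantitative gap length $m = [(\log N)(\log\log N)^{\delta}]$ as in the linear case treated in \cite{Gabdullin.Radomskii}.

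First, I would construct a residue class $r$ modulo $P := \prod_{p \in \mathcal{P}_f,\, p \leq z} p$ such that, setting $n_1 \equiv r \pmod{P}$ and $n_2 := N - n_1$, every element of $I_1 \cup I_2$ has the image under $f$ divisible by some prime $p \leq z$. Following the template of \cite{Gabdullin.Radomskii}, the sieving primes are partitioned into several ranges: very small primes handle the bulk of both intervals via a straightforward CRT step; primes in a medium range are used in a Maier-matrix / greedy step to reduce the uncovered set in $I_1 \cup I_2$ to size $\ll m/(\log\log N)^{1+\delta}$; and the largest primes, in a range $(y, z]$, knock out the residual positions one prime per position. The coupling $n_2 \equiv N - n_1 \pmod{p}$ forces the residue of $n_2$ modulo each sieving prime once $n_1$ is fixed, so at every stage the residue class $a_p$ to be removed must be selected only after examining how $I_1$ and $I_2$ jointly meet the residues modulo $p$.

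The main obstacle, as I see it, is extending the medium-range covering step to a polynomial $f$ of degree $B \geq 2$. For $f(n) = n$ each prime $p$ contributes exactly one residue class, and the coupling between $I_1$ and $I_2$ is essentially linear and transparent. For higher-degree irreducible $f$, a good prime $p$ contributes between $1$ and $B$ roots modulo $p$, and the covering of $I_1 \cup I_2$ by $p$ depends on $n_1 \pmod p$ in a more intricate, multi-valued fashion. I expect the bulk of the technical effort to consist of upgrading the probabilistic argument of \cite{Gabdullin.Radomskii}: bounding, over $n_1 \pmod{P}$ sampled uniformly, the expected number of positions in $I_1 \cup I_2$ not covered by any small prime. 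This should use a quantitative Chebotarev-type lower bound $|\mathcal{P}_f \cap (x, 2x]| \gg x/\log x$, the uniform upper bound $\omega_f(p) \leq B$ on the number of roots modulo $p$, and a careful matching argument exploiting the freedom to choose among up to $B$ distinct residues to kill per prime.

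Finally, once the sieve produces a valid $n_1 \in [m+1, N-m-1]$, compositeness of $f(n)$ for every $n \in I_1 \cup I_2$ follows because $|f(n)| \gg N^{B}$ dwarfs any sieving prime $p \leq z$, so the small prime divisor of $f(n)$ supplied by the construction is necessarily a proper divisor. This yields the theorem for all $N$ sufficiently large in terms of $f$ and $\delta$.
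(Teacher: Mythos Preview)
Your strategy---adapt \cite{Gabdullin.Radomskii} to the polynomial setting by replacing the condition $p\mid n$ with $p\mid f(n)$ via the roots of $f$ modulo $p$, using Chebotarev for the density of usable primes and the bound $\#I_p\le B$---is exactly the paper's, and the paper indeed combines the machinery of \cite{FKMPT}, \cite{Ford.Gabdullin}, and \cite{Gabdullin.Radomskii} in this way.

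Two points in your sketch diverge from the actual execution and are worth flagging. First, the ``medium-range'' step is not a Maier-matrix or greedy covering; it is the random hypergraph covering lemma of \cite{FKMPT} (Lemma~\ref{L2} here), applied to weighted random subsets $\mathbf{e}_q$ built from the sieve weights $\bm\lambda'(H;q,n)$, $\bm\lambda''(H;q,n)$. The constant $C(1/2)$ in the exponent comes directly from the hypothesis $C_1\ge 10^{2\delta}$ of that lemma, and a Maier--Pintz style argument would not reach it. Second, and more substantively, the paper does \emph{not} choose the residue class for each medium prime $q$ by examining $I_1$ and $I_2$ jointly, as your description suggests. Instead the medium primes in $(z,x/2]$ are partitioned in advance into two disjoint sets $\mathcal{Q}'$ and $\mathcal{Q}''$ (according to the parity of $\log_\xi H_q$); each $q\in\mathcal{Q}'$ receives a shift $n'_q$ aimed only at the first interval, and each $q\in\mathcal{Q}''$ a shift $n''_q$ aimed only at the second, with the Chinese Remainder Theorem gluing the two choices of $b\bmod q$ together. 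This decoupling is precisely what makes the constraint $n_2=N-n_1$ harmless and is the reason one obtains $C(1/2)$ rather than $C(1)$: each interval gets only half the medium primes. Your phrase ``selected only after examining how $I_1$ and $I_2$ jointly meet the residues modulo $p$'' points toward a different mechanism that would be hard to reconcile with the covering lemma; since you are adapting \cite{Gabdullin.Radomskii} you would presumably discover the splitting there, but it is the one structural idea your sketch misses.
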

Theorem \ref{T1} extends a result of Gabdullin and Radomskii \cite{Gabdullin.Radomskii} which showed the same result but with $f(n) = n$. We note that numerical calculations show that $C(1/2)> 1/325565$, and so we can take $\delta = 1/325 565$ in Theorem \ref{T1}. Also, we note that the polynomial $f$ need not have integer coefficients. Indeed, by P\'{o}lya's theorem \cite{Polya}, $f$ is integer valued at integers if and only if $f$ has the form $f(x) = \sum_{j=0}^{B} a_{j} \binom{x}{j}$ with every $a_{j}\in \mathbb{Z}$. In particular, $B! f(x) \in \mathbb{Z}[x]$.

Recall that a set $A \subseteq \mathbb{N}$ is called a basis of order $k$ if every sufficiently large positive integer can be represented as a sum of $k$ summands from $A$. Theorem \ref{T1} implies that the set
\begin{align*}
\{n\geq 3:\,&f(l)\text{ is composite for any }l\in [n-a(n), n+a(n)],\\
 &\text{where } a(n)=[(\log n) (\log\log n)^{\delta}]\}
\end{align*}is a basis of order $2$ for any $\delta < C(1/2)$.

 Let
\[
\Lambda_{f}=\{n\in \mathbb{N}: f(n)\text{ is prime}\}.
\]When $f\in \mathbb{Z}[x]$ is irreducible over $\mathbb{Q}$, has degree two or greater, and the sieving system corresponding to $f$ is non-degenerate, that is
\[
\#\{n\in \mathbb{Z}/p\mathbb{Z}: f(n)\equiv 0\text{ (mod $p$)}\} < p
\]for any prime $p$, it is still an open conjecture (of Bouniakowsky \cite{Bouniakowsky}) that the set $\Lambda_{f}$ is infinite. Moreover it is believed (see the conjecture of Bateman and Horn \cite{Bateman.Horn}, \cite{Bateman.Horn2}) that the number of $n \leq N$ such that $n\in \Lambda_{f}$ should have an asymptotic formula $(\mathfrak{S}(f)+o(1))N/\log N$, where $\mathfrak{S}(f)>0$ is a constant depending only on $f$, and so the gaps of Theorem \ref{T1} would be unusually large compared  to the average gap of size $\asymp_{f} \log N$.

In \cite{FKMPT}, Ford, Konyagin, Maynard, Pomerance, and Tao showed that for any $f$ as in Theorem \ref{T1} and for any $0< \delta < C(1/B)$, for every sufficiently large $N$, there is a string of consecutive positive integers $n\in [1,N]$ of length at least $(\log N) (\log\log N )^{\delta}$ for which $f(n)$ is composite. Ford and Gabdullin \cite{Ford.Gabdullin} improved the result in \cite{FKMPT}, replacing $C(1/B)$ by $C(1)$. We follow the constructions in \cite{FKMPT}, \cite{Ford.Gabdullin}, and \cite{Gabdullin.Radomskii} , modifying some steps in them.

 We also note that the technique proposed by Ford, Konyagin, Maynard, Pomerance, and Tao in \cite{FKMPT} allows to find large gaps between consecutive primes. Theorem 1 in \cite{FKMPT} implies that there is a gap between consecutive primes in $[1, N]$ of size
 \[
 \gg\log N (\log\log N)^{\delta}
 \] for any $\delta < C(1)\approx 1/835$. This is stronger than the trivial bound of $(1+o(1))\log N$, which is immediate from the Prime Number Theorem, but is worse than the current best bounds for this problem. Indeed, the problem of finding large gaps between consecutive primes has a long history, and it is currently known that gaps of size
 \[
 \gg \log N\,\frac{\log\log N\log\log\log\log N}{\log\log\log N}
 \]exist below $N$ if $N$ is large enough, a recent result of Ford, Green, Konyagin, Maynard, and Tao \cite{FGKMT.LARGE}. The key interest is that the technique from \cite{FKMPT} applies to much more general sieving situations, to which it appears difficult to adapt the previous techniques.

\section{Notation}

We use $X\ll Y$, $Y\gg X$, or $X=O(Y)$ to denote the estimate $|X|\leq C Y$ for some constant $C>0$, and write $X\asymp Y$ for $X \ll Y \ll X$. Throughout the remainder of the paper, all implied constants in $O$, $\ll$ or $\gg$ may depend on quantities $B$, $\delta$, $\rho (f)$ \eqref{rho.asymp}, $C_{f}$ \eqref{Density.asymp}, and on the positive parameters $K$, $\xi$, $M$, and $\varepsilon$ which we will describe below. We also assume that the quantity $x$ is sufficiently large in terms of all of these parameters.

The notation $X=o(Y)$ means that $\lim_{x\to \infty} X/Y = 0$, and the notation $X\sim Y$  means that $\lim_{x\to \infty} X/Y = 1$.

If $S$ is a statement, we use $1_{S}$ to denote its indicator, thus $1_{S}=1$ when $S$ is true and $1_{S}=0$ when $S$ is false.

We will rely on probabilistic methods in this paper. Boldface symbols such as $\mathbf{b}$, $\mathbf{n}$, $\mathbf{S}$, $\bm{\lambda}$, etc. denote random variables (which may be real numbers, random sets, random functions, etc.) Most of these random variables will be discrete (in fact they will only take on finitely many values), so that we may ignore any technical issues of measurability. We use $\mathbb{P}(\mathbf{E})$ to denote the probability of a random event $\mathbf{E}$, and $\mathbb{E}(\mathbf{X})$ to denote the expectation of the random (real-valued) variable $\mathbf{X}$.

The symbol $p$ (as well as variants such as $p_1$, $p_2$, etc.) will always denote a prime.

If $x$ is a real number, then $[x]$ denotes its integral part.

By $\# A$ we denote the number of elements of a finite set $A$.

\section{Setup}\label{SECTION.SETUP}

We first note that $\delta \in (0,1/2)$ satisfies $\delta< C(\rho)$ if and only if
\begin{equation}\label{C.RHO.INEQUALITY}
\frac{6\cdot 10^{2\delta}}{\log (1/ (2\delta))}<\rho.
\end{equation}

For a fixed $\delta\in (10^{-6}, C(1/2))$, we define
\begin{equation}\label{SETUP:Def.y}
y= [x(\log x)^{\delta}]\qquad\text{and}\qquad z=\frac{y\log\log x}{(\log x)^{1/2}}.
\end{equation} We note that $\log y \sim \log x$ and $\log z \sim \log x$.

Let $\widetilde{f}(x) = B! f(x)$. Hence, $\widetilde{f}\in \mathbb{Z}[x]$. We set $I_{p} = \emptyset$ if $p \leq B$ or $p$ divides the leading coefficient of $f$, and
\[
I_p := \{n\in \mathbb{Z}/p\mathbb{Z}: \widetilde{f}(n)\equiv 0\text{ (mod $p$)}\}
\]otherwise. By Lagrange's theorem, we have
\[
\#I_p < p\text{\quad and \quad}\#I_p \leq B
\] for any $p$. Since $f$ is irreducible, from Landau's Prime Ideal Theorem \cite{Landau}
(see also \cite[pp. 35-36]{Cojocaru.Murty}) we have
\begin{equation}\label{SETUP:PRIME.IDEAL.TH}
\sum_{p\leq x} \#I_{p} = \textup{li}\,x + O(x\,\textup{exp}(-c\sqrt{\log x}))
\end{equation} for some $c>0$, and so by partial summation we see that
\begin{equation}\label{Ip.asymp}
\sum_{p\leq x} \frac{\#I_p}{p} = \ln\ln x + c_{f} + o(1),
\end{equation}where $c_f$ is a constant depending only on $f$. By $q$ we always denote a prime with $\#I_{q} \geq 1$. In particular, this implies that $q>B$.

 By Chebotarev Density Theorem \cite{Chebotarev} (see also \cite{Lagarias.Odlyzko}) there exist $\rho = \rho(f)> 0$ such that
\begin{equation}\label{rho.asymp}
\lim_{x\to \infty} \frac{\#\{p\leq x: \#I_{p}\geq1\}}{x/\log x} = \rho,
\end{equation} and for each $\nu\in \{1,\ldots, B\} $ there exists $\rho_{\nu} = \rho_{\nu} (f)\geq 0$ such that
 \begin{equation}\label{SETUP:CHEBOTAREV.NU}
 \lim_{x\to \infty} \frac{\#\{p\leq x: \#I_{p}=\nu\}}{x/\log x} = \rho_{\nu}.
 \end{equation}We set
 \[
 \mathcal{N} = \{1\leq \nu \leq B: \rho_{\nu}>0\}.
 \]

We see from \eqref{SETUP:PRIME.IDEAL.TH} and \eqref{SETUP:CHEBOTAREV.NU} that
\begin{equation}\label{SUM.nu.rho}
\sum_{\nu \in \mathcal{N}} \nu \rho_{\nu} = 1,
\end{equation} and from \eqref{rho.asymp} and \eqref{SETUP:CHEBOTAREV.NU} that
\[
\sum_{\nu \in \mathcal{N}} \rho_{\nu} = \rho.
\]

We define
\begin{gather*}
P(x):= \prod_{q\leq x}q,\quad\qquad \sigma (x):= \prod_{q\leq x}\bigg(1-\frac{\#I_{q}}{q}\bigg),\\
 P (z,x):=\prod_{z<q\leq x}q,\quad\qquad \sigma (z,x):= \prod_{z<q\leq x}\bigg(1-\frac{\#I_{q}}{q}\bigg).
\end{gather*} By the Prime Number Theorem, we have
\begin{equation}\label{SETUP:P.x.EST}
P (x)\leq \prod_{p\leq x} p =\textup{exp}((1+o(1))x),
\end{equation}and from \eqref{Ip.asymp} we obtain
\begin{equation}\label{Density.asymp}
\sigma (x) \sim \frac{C_{f}}{\log x},
\end{equation} where $C_{f}>0$ is a constant depending only on $f$.

 For any integer $b$, we define
\begin{align*}
S_{x}(b)&:= \{n\in \mathbb{Z}:\ n-b \not \equiv \alpha\text{ (mod $q$) for any $\alpha\in I_{q}$ for any $q\leq x$}\},\\
S_{z,x}(b)&:= \{n\in \mathbb{Z}:\ n-b \not \equiv \alpha\text{ (mod $q$) for any $\alpha \in I_{q}$ for any $z<q\leq x$}\}.
\end{align*} We denote $S_x:= S_{x}(0)$. It is clear that $S_{x}(b)= S_{x}+ b$. Here $S_{x}+b:= \{s+b:\ s\in S_{x}\}$.

Let $\xi > 1$. We set
\[
\mathfrak{H}:=\bigg\{H\in\{1, \xi, \xi^{2},\ldots\}:\ \frac{2y}{x}\leq H\leq \frac{y}{\xi z}\bigg\}
\]and
\[
\mathfrak{H}'=\{H\in \mathfrak{H}: H=\xi^{j},\ j\text{ is even}\},\qquad \mathfrak{H}''=\{H\in \mathfrak{H}: H=\xi^{j},\ j\text{ is odd}\}.
\]It is clear that
\[
\bigsqcup_{H\in\mathfrak{H}}\Big(\frac{y}{\xi H}, \frac{y}{H}\Big]\subset \Big(z, \frac{x}{2}\Big],
\]and
\begin{equation}\label{H_range}
(\log x)^{\delta}\leq H\leq \frac{(\log x)^{1/2}}{\log\log x}\quad \qquad (H\in \mathfrak{H}).
\end{equation} For each $H\in \mathfrak{H}$ and $\nu \in \mathcal{N}$, let
 \[
 \mathcal{Q}_{H,\nu}=\Big\{q\in \Big(\frac{y}{\xi H}, \frac{y}{H}\Big]: \#I_{q} = \nu\Big\}.
 \]It follows from \eqref{SETUP:CHEBOTAREV.NU} that
\begin{equation}\label{QH.NU.ASYMPT}
\# \mathcal{Q}_{H, \nu}\sim \rho_{\nu}(1-1/\xi)\frac{y}{H\log x}.
\end{equation}Also, let
\[
\mathcal{Q}'_{\nu} = \bigcup_{H\in \mathfrak{H}'}\mathcal{Q}_{H, \nu},\qquad \mathcal{Q}' = \bigcup_{\nu \in \mathcal{N}} \mathcal{Q}'_{\nu},
\]and
\[
\mathcal{Q}''_{\nu} = \bigcup_{H\in \mathfrak{H}''}\mathcal{Q}_{H, \nu},\qquad \mathcal{Q}'' = \bigcup_{\nu \in \mathcal{N}} \mathcal{Q}''_{\nu}.
\]Let
\[
\mathcal{Q}=\mathcal{Q}'\cup \mathcal{Q}''.
\]

We note that $\mathcal{Q}'\cap \mathcal{Q}''=\emptyset$, and if $q\in \mathcal{Q}$, then $q\in (z, x/2]$. It is clear that
\begin{equation}\label{SETUP:Q.EST}
\#\mathcal{Q} < \#\Big\{p: p\leq \frac{x}{2}\Big\}\leq \frac{x}{\log x}   < y.
\end{equation} For $q\in \mathcal{Q}$, let $\nu_{q}:= \#I_{q}$ and let $H_{q}$ be the unique element of $\mathfrak{H}$ such that
\[
\frac{y}{\xi H_{q}}< q\leq \frac{y}{H_{q}}.
\]

We denote by $\mathbf{b}$ a random residue class from $\mathbb{Z}/P\mathbb{Z}$, chosen with uniform probability, where we adopt the abbreviations
\[
P=P (z),\qquad \sigma=\sigma (z),\qquad \mathbf{S}'= S_{z}(\mathbf{b}),\qquad \mathbf{S}''= S_{z}(-N -\mathbf{b}).
\]Let
\[
6<M< 7.
\]For $H\in\mathfrak{H}$, we use the notation
\[
P_{1}=P (H^{M}),\qquad \sigma_{1}=\sigma (H^{M}),\qquad P_{2}=P (H^{M}, z),\qquad \sigma_{2}=\sigma (H^{M}, z),
\]and define
\begin{gather*}
\mathbf{b}_{1}\equiv \mathbf{b}\ \text{(mod $P_{1}$)},\qquad
\mathbf{S}'_{1}= S_{H^{M}}(\mathbf{b}_{1}),\qquad \mathbf{S}''_{1}= S_{H^{M}}(-N-\mathbf{b}_{1}),\\
\mathbf{b}_{2}\equiv \mathbf{b}\ \text{(mod $P_{2}$)},\qquad \mathbf{S}'_{2}= S_{H^{M}, z}(\mathbf{b}_{2}),
\qquad \mathbf{S}''_{2}= S_{H^{M}, z}(-N -\mathbf{b}_{2}),
\end{gather*}
with the convention that $\mathbf{b}_{1}\in \mathbb{Z}/P_{1}\mathbb{Z}$ and $\mathbf{b}_{2}\in \mathbb{Z}/P_{2}\mathbb{Z}$. Thus, $\mathbf{b_1}$ and $\mathbf{b_{2}}$ are each uniformly distributed, are independent of each other, and likewise $\mathbf{S}'_{1}$ and $\mathbf{S}'_{2}$ are independent, and $\mathbf{S}''_{1}$ and $\mathbf{S}''_{2}$ are independent. We also have the obvious relations
\[
P=P_1 P_2,\qquad \sigma=\sigma_1\sigma_2,\qquad \mathbf{S}'=\mathbf{S}'_{1}\cap \mathbf{S}'_{2},\qquad \mathbf{S}''=\mathbf{S}''_{1}\cap \mathbf{S}''_{2}.
\] From \eqref{Density.asymp} we have
\begin{equation}\label{SIGMA.2}
\sigma_{2}^{-1}=\prod_{H^{M}< q \leq z}\bigg(1-\frac{\#I_{q}}{q}\bigg)^{-1}\sim \frac{\log z}{M\log H}\sim \frac{\log x}{M\log H}.
\end{equation} For $q\in \mathcal{Q}$, let
\[
I_{q}=\{\alpha_{q, i}: 1\leq i \leq \nu_{q}\},
\]where $\alpha_{q,i}\in [1,q]$ for all $i\leq \nu_{q}$. For $n\in \mathbb{Z}$, we set
\begin{align*}
\mathbf{AP}'(J; q, n)&:=\bigg(\bigsqcup_{i=1}^{\nu_{q}}\{n+\alpha_{q,i}+qh:\ 1\leq h \leq J\}\bigg)\cap \mathbf{S}'_{1},\\
\mathbf{AP}''(J; q, n)&:=\bigg(\bigsqcup_{i=1}^{\nu_{q}}\{n+\alpha_{q,i}-qh:\ 1\leq h \leq J\}\bigg)\cap \mathbf{S}''_{1}.
\end{align*} Let
\begin{equation}\label{Def_lambda}
\bm{\lambda}'(H; q, n)= \frac{1_{\mathbf{AP}'(KH; q, n)\subset \mathbf{S}'_{2}}}{\sigma_{2}^{\# \mathbf{AP}'(KH; q, n)}}\qquad\text{and}\qquad    \bm{\lambda}''(H; q, n)= \frac{1_{\mathbf{AP}''(KH; q, n)\subset \mathbf{S}''_{2}}}{\sigma_{2}^{\# \mathbf{AP}''(KH; q, n)}}.
\end{equation}

Suppose that $n\in [1,y]$, $q\in \mathcal{Q}'$, $i \leq \nu_{q}$, and $1 \leq h \leq K H_{q}$. Then $n- \alpha_{q, i} - qh \leq y$ and
\[
n- \alpha_{q, i} - qh \geq 1 - q - q K H_{q}\geq 1- q - Ky > - (K+1)y.
\]Hence,
\begin{equation}\label{n.qh.range.1}
- (K+1)y < n- \alpha_{q, i} - qh\leq y.
\end{equation}Similarly, let $n\in [-y,-1]$, $q\in \mathcal{Q}''$, $i \leq \nu_{q}$, and $1 \leq h \leq K H_{q}$. Then
\[
n - \alpha_{q,i}+ qh \geq -y- q +q = - y,
\]and
\[
n - \alpha_{q,i}+ qh \leq - 1 - 1 + q K H_{q} \leq - 2 + Ky < (K+1)y.
\]Thus, we obtain
\begin{equation}\label{n.qh.range.2}
-y \leq n - \alpha_{q,i}+ qh < (K+1)y.
\end{equation}

\section{Main Lemmas}

\begin{lemma}\label{L1}
Let $10^{-6}< \delta < C(1/2)$. Then there exist $6<M < 7$, $\xi>1$, $K>0$, and $0 < \varepsilon < (M-6)/7$ such that  for sufficiently large $x$ (with respect to $\delta$, $M$, $\xi$, $K$, and $\varepsilon$) there exist an integer $b$ \textup{mod $P(z)$} (a choice of $\mathbf{b}$) and non-empty sets $\mathcal{R}'_{\nu}\subseteq \mathcal{Q}'_{\nu}$ and $\mathcal{R}''_{\nu}\subseteq \mathcal{Q}''_{\nu}$ for $\nu \in \mathcal{N}$ such that the following statements hold.

\textup{(i)} One has
\begin{equation}\label{S.EST}
\#(S'\cap [1, y])\leq 2\sigma y\qquad\text{and}\qquad \#(S''\cap [-y, -1])\leq 2\sigma y.
\end{equation}

\textup{(ii)} For all $q\in \mathcal{R}'= \bigcup_{\nu \in \mathcal{N}}\mathcal{R}'_{\nu}$ one has
\begin{equation}\label{L1:lambda.1}
\sum_{-(K+1)y< n \leq y} \lambda ' (H_{q}; q, n) = \left(1+O\left(\frac{1}{(\log x)^{\delta (1+\varepsilon)}}\right)\right)(K+2)y.
\end{equation}

For all $q\in \mathcal{R}''= \bigcup_{\nu \in \mathcal{N}}\mathcal{R}''_{\nu}$ one has
\begin{equation}\label{L1:lambda.2.N}
\sum_{-y\leq n < (K+1) y} \lambda ''(H_{q}; q, n) = \left(1+O\left(\frac{1}{(\log x)^{\delta (1+\varepsilon)}}\right)\right)(K+2)y.
\end{equation}

\textup{(iii)} For each $\nu\in \mathcal{N}$ and $i\in\{1,\ldots, \nu\}$, there exists a set $V'_{\nu, i}\subset (S '\cap [1,y])$ such that
\begin{equation}\label{V.1.DOPOLNENIYE}
\# ((S'\cap [1, y]) \setminus V'_{\nu, i}) \leq \frac{\rho x}{10 B^{2}\log x}
\end{equation}and for any $n\in V'_{\nu, i}$, one has
\begin{equation}\label{L1:lambda.1.struya}
\sum_{q\in \mathcal{R}'_{\nu}} \sum_{h\leq K H_{q}}\lambda ' (H_{q}; q, n-\alpha_{q,i}-qh)=\bigg(C'_{\nu} + O\bigg(\frac{1}{(\log x)^{\delta (1+\varepsilon)}}\bigg)\bigg) (K+2)y
\end{equation}for some quantity $C'_{\nu}$ independent of $n$ and $i$ with
\begin{equation}\label{C1.nu.INEQ.BASE}
 10^{2\delta} \rho_{\nu} \leq  C'_{\nu} \leq 100 \rho_{\nu}.
\end{equation}
\textup{(iv)} For each $\nu\in \mathcal{N}$ and $i\in\{1,\ldots, \nu\}$, there exists a set $V''_{\nu, i}\subset (S ''\cap [-y,-1])$ such that
\begin{equation}\label{V.2.DOPOLNENIYE}
\# ((S''\cap [-y, -1]) \setminus V''_{\nu, i}) \leq \frac{\rho x}{10 B^{2}\log x}
\end{equation}and for any $n\in V''_{\nu, i}$, one has
\begin{equation}\label{L1:lambda.2.struya}
\sum_{q\in \mathcal{R}''_{\nu}} \sum_{h\leq K H_{q}}\lambda '' (H_{q}; q, n-\alpha_{q,i}+qh)=\bigg(C''_{\nu} + O\bigg(\frac{1}{(\log x)^{\delta (1+\varepsilon)}}\bigg)\bigg) (K+2)y
\end{equation}for some quantity $C''_{\nu}$ independent of $n$ and $i$ with
\[
 10^{2\delta} \rho_{\nu} \leq  C''_{\nu} \leq 100 \rho_{\nu}.
\]
\end{lemma}

\begin{lemma}\label{L2}
Suppose that $0<\delta \leq 1/2$ and $K_{0}\geq 1$, and let $y\geq y_{0}(\delta, K_{0})$ with $y_{0}(\delta, K_{0})$ sufficiently large, and let $V$ be a finite set with $\#V\leq y$. Let $1\leq s \leq y$, and suppose that $\mathbf{e}_{1},\ldots, \mathbf{e}_{s}$ are random subsets of $V$ satisfying the following:
\begin{align}
\#\mathbf{e}_{i}&\leq \frac{K_{0}(\log y)^{1/2}}{\log\log y}\qquad (1\leq i\leq s),\label{L3:I}\\
\mathbb{P}(v\in \mathbf{e}_{i})&\leq y^{-1/2 - 1/100}\qquad (v\in V,\ 1\leq i \leq s),\label{L3:II}\\
\sum_{i=1}^{s}\mathbb{P}(v, v' \in \mathbf{e}_{i})&\leq y^{-1/2}\qquad (v, v' \in V,\ v\neq v'),\label{L3:III}\\
\bigg|\sum_{i=1}^{s} \mathbb{P}(v\in \mathbf{e}_{i}) - C_1 \bigg|&\leq \eta\qquad (v\in V),\label{L3:IV}
\end{align}where $C_1$ and $\eta$ satisfy
\[
10^{2\delta}\leq C_1\leq 100,\qquad \eta \geq \frac{1}{(\log y)^{\delta}\log\log y}.
\]Then there are subsets $e_i$ of $V$, $1\leq i \leq s$, with $e_i$ being in the support of $\mathbf{e}_{i}$ for every $i$, and such that
\[
\# \bigg(V\setminus \bigcup_{i=1}^{s} e_i\bigg)\leq C_{0}\eta \#V,
\]where $C_0 >0$ is an absolute constant.
\end{lemma}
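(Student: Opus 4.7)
\textbf{Plan for Lemma \ref{L2}.} The plan is to use the probabilistic method. Hypotheses (I)--(IV) involve only the marginal distributions of each $\mathbf{e}_{i}$ together with within-one-$\mathbf{e}_{i}$ pairwise probabilities, and the supports of the $\mathbf{e}_{i}$ are unchanged if we replace the joint distribution by the product of the marginals. So we may assume without loss of generality that $\mathbf{e}_{1}, \ldots, \mathbf{e}_{s}$ are jointly independent, and aim for the bound $\mathbb{E}[\#(V \setminus \bigcup_{i} \mathbf{e}_{i})] \leq C_{0}\eta \#V$, from which Markov's inequality yields a deterministic realization with the required covering property.

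The core one-shot estimate is a per-element covering bound. Set $\mathbf{N}_{v} := \sum_{i=1}^{s} \mathbf{1}_{v \in \mathbf{e}_{i}}$, so that by hypothesis (IV), $\mathbb{E}[\mathbf{N}_{v}] = C_{1} + O(\eta)$. Using independence across $i$, hypothesis (II) (each summand is tiny, so $\sum_{i} \mathbb{P}(v \in \mathbf{e}_{i})^{2} = O(y^{-1/2-1/100})$ is negligible), and the estimate $\log(1-p) = -p + O(p^{2})$, we obtain
\[
\mathbb{P}(\mathbf{N}_{v} = 0) = \prod_{i=1}^{s} \bigl(1 - \mathbb{P}(v \in \mathbf{e}_{i})\bigr) \leq \exp(-C_{1} + O(\eta)).
\]
Summing over $v$ produces only the one-shot bound $\mathbb{E}[\#(V \setminus \bigcup_{i} \mathbf{e}_{i})] \leq e^{-C_{1}}(1 + O(\eta))\#V$, which is constant-factor coverage and does not reach the required $O(\eta)$ in the regime $\log(1/\eta) \gg C_{1}$ allowed by the hypotheses.

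To boost the bound to $C_{0}\eta \#V$, I would iterate the construction over $T \asymp \log(1/\eta) = O(\log\log y)$ rounds. Partition $[s]$ into $T$ roughly equal batches $B_{1}, \ldots, B_{T}$ and process them sequentially: after fixing a choice of $e_{i}$ for $i \in B_{1} \cup \cdots \cup B_{t-1}$, apply the one-shot analysis with indices in $B_{t}$ to the current residual uncovered set $V_{t-1}$, and select realizations that cover a constant fraction of $V_{t-1}$. After $T$ batches, $\#V_{T} \leq (1-c)^{T}\#V \leq C_{0}\eta \#V$.

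The principal obstacle is that na\"ively each batch carries total mean coverage $C_{1}/T$ on $V_{t-1}$, which is too small for a direct first-moment argument. Resolving this requires hypothesis (III) to control within-batch pair correlations and force the random set $\bigcup_{i \in B_{t}} \mathbf{e}_{i}$ to behave on $V_{t-1}$ like a near-Poisson random subset whose covering probability attains the first-moment expectation, together with a second-moment / Chebyshev argument to convert this into a positive probability that a constant fraction of $V_{t-1}$ gets hit per batch. Carefully tracking and propagating the $O(\eta)$ slack of (IV) through the $O(\log\log y)$ rounds, which is feasible thanks to the lower bound $\eta \geq 1/((\log y)^{\delta}\log\log y)$, is the technical heart of the argument.
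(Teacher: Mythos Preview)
The paper does not supply its own proof; it simply cites \cite[Lemma~3.1]{FKMPT}.

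Your reduction to independent $\mathbf{e}_i$ and the one-shot estimate $\mathbb{P}(\mathbf{N}_v=0)\le e^{-C_1+O(\eta)}$ are fine. The genuine gap is in the iteration. With $[s]$ partitioned into $T\asymp\log(1/\eta)$ \emph{disjoint} batches, the batch-$t$ mean $\sum_{i\in B_t}\mathbb{P}(v\in\mathbf{e}_i)$ is only $\approx C_1/T=o(1)$ for each $v\in V_{t-1}$, so the expected fraction of $V_{t-1}$ hit by batch $t$ is $\approx C_1/T$. Hypothesis~\eqref{L3:III} together with a second-moment argument can only concentrate that fraction \emph{near} $C_1/T$; they cannot manufacture a positive probability of hitting a fixed constant fraction $c>0$ independent of $T$. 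Hence your scheme yields at best
\[
\#V_T\ \approx\ \prod_{t=1}^{T}\Bigl(1-\tfrac{C_1}{T}\Bigr)\,\#V\ \approx\ e^{-C_1}\,\#V,
\]
which is exactly the one-shot bound and never reaches $O(\eta\,\#V)$ in the regime $\log(1/\eta)\gg C_1$ permitted by the hypotheses. Disjoint batching simply cannot amplify the total first-moment budget $C_1$ into the required $\log(1/\eta)$; the sentence ``a second-moment / Chebyshev argument to convert this into a positive probability that a constant fraction of $V_{t-1}$ gets hit per batch'' is where the plan breaks.

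The argument in \cite{FKMPT} avoids this by not splitting $[s]$: at each of the $\asymp\log(1/\eta)$ rounds one takes fresh independent samples of \emph{all} the $\mathbf{e}_i$, so the full mean $C_1$ is available every round and $\mathbb{E}\,\#V_T\lesssim e^{-C_1 T}\#V$. The substantive difficulty --- which your plan does not address --- is then to commit, for each $i$, a \emph{single} realisation $e_i$ drawn from among its $T$ samples so that the resulting union still leaves only $O(\eta\,\#V)$ uncovered. Hypotheses~\eqref{L3:I}, \eqref{L3:II}, \eqref{L3:III} are precisely what make this selection step go through.
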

\begin{proof}
This is \cite[Lemma 3.1]{FKMPT}.
\end{proof}

\section{Deduction of Theorem \ref{T1} from Lemmas \ref{L1} and \ref{L2}}\label{SECTION.DEDUCE}

Let all hypotheses of Lemma \ref{L1} hold. We put
\[
V' = \bigcap_{\nu\in \mathcal{N}}\bigcap_{1\leq i\leq \nu} V'_{\nu, i}.
\]From part \textup{(iii)} of Lemma \ref{L1}, we obtain $V' \subset (S' \cap [1, y])$,
\begin{align*}
\#((S' \cap [1, y])\setminus V') &\leq \sum_{\nu \in \mathcal{N}}\sum_{1\leq i \leq \nu}\#((S' \cap [1, y])\setminus V'_{\nu, i})\\
&\leq \sum_{\nu \in \mathcal{N}}\sum_{1\leq i \leq \nu} \frac{\rho x}{10 B^{2} \log x}\leq \frac{\rho x}{10 \log x},
\end{align*}and if $n\in V'$, then for each $\nu\in \mathcal{N}$ and $i\in\{1,\ldots,\nu\}$ the statement \eqref{L1:lambda.1.struya} holds.

For each $q\in \mathcal{R}'$, we choose a random integer $\mathbf{n}'_{q}$ with probability density function
\begin{equation}\label{Def_n}
\mathbb{P}(\mathbf{n}'_{q}=n)=\frac{\lambda '(H_{q}; q, n)}{\sum_{-(K+1)y<t\leq y}\lambda '(H_{q}; q, t)}\qquad
(-(K+1)y< n \leq y).
\end{equation} Note that by \eqref{L1:lambda.1} the denominator is non-zero, so that this is a well-defined probability distribution. For $q\in \mathcal{R}'$, we define the random set
\[
\mathbf{e}'_{q} = \bigsqcup_{1 \leq i \leq \nu_{q}} \mathbf{e}'_{q, i},
\]where
\[
\mathbf{e}'_{q, i} = V' \cap \{\mathbf{n}'_{q}+ \alpha_{q,i} + qh: 1\leq h\leq KH_{q}\},\quad i=1,\ldots, \nu_{q}.
\]In particular, we obtain that $\mathbf{e}'_{q} \subset V'$ for any $q\in \mathcal{R}'$.

We are going to apply Lemma \ref{L2} with $V=V'$, $\{\mathbf{e}_{1},\ldots, \mathbf{e}_{s}\}=\{\mathbf{e}'_{q}:\ q\in \mathcal{R}'\}$, $s=\# \mathcal{R}'$, $K_{0}=BK$, and
\begin{equation}\label{ETA.DEF}
\eta=\frac{1}{(\log y)^{\delta}\log \log y}.
\end{equation} We note that $s\geq 1$, since $\mathcal{R}'\neq \emptyset$, and by \eqref{SETUP:Q.EST} we have $s\leq \#\mathcal{Q}\leq y.$

If $q\in \mathcal{R}'$, then from \eqref{H_range} we have
\[
\#\mathbf{e}'_{q}\leq BKH_{q}\leq \frac{BK(\log x)^{1/2}}{\log \log x}\leq   \frac{K_{0}(\log y)^{1/2}}{\log \log y}.
\] Hence, \eqref{L3:I} holds.

For $n\in V'$ and $q\in \mathcal{R}'$, we have from \eqref{n.qh.range.1}, \eqref{Def_n}, \eqref{L1:lambda.1}, and \eqref{Def_lambda} that
\begin{align}
\mathbb{P}(n\in \mathbf{e}'_{q})&=\sum_{1\leq i \leq \nu_{q}}\sum_{1\leq h \leq KH_{q}} \mathbb{P}(\mathbf{n}'_{q}=n -\alpha_{q,i}-qh)\notag\\
&=\sum_{1\leq i \leq \nu_{q}}\sum_{1\leq h \leq K H_{q}} \frac{\lambda '(H_{q}; q, n-\alpha_{q,i}-qh)}{\sum_{-(K+1)y <t\leq y}\lambda '(H_{q}; q, t)}\notag\\
&\ll \frac{1}{y} H_{q} \sigma_{2}^{-BK H_q}\ll y^{-9/10},\label{PROB.n.Eq.1}
\end{align} which gives \eqref{L3:II}.

For $n\in V'$, we have from \eqref{Def_n}, \eqref{L1:lambda.1.struya}, and \eqref{L1:lambda.1} that
\begin{align*}
\sum_{q\in \mathcal{R}'} \mathbb{P}(n\in \mathbf{e}'_{q})&=
\sum_{\nu\in \mathcal{N}}\sum_{q\in \mathcal{R}'_{\nu}}\mathbb{P}(n\in \mathbf{e}'_{q})=
\sum_{\nu\in \mathcal{N}}\sum_{q\in \mathcal{R}'_{\nu}}\sum_{i \leq \nu} \sum_{h \leq KH_{q}} \mathbb{P}(\mathbf{n}'_{q}=n-\alpha_{q,i}-qh)\\
&=\sum_{\nu\in \mathcal{N}}\sum_{i \leq \nu}\sum_{q\in \mathcal{R}'_{\nu}} \sum_{h \leq KH_{q}}
\frac{\lambda '(H_{q}; q, n-\alpha_{q,i}-qh)}{\sum_{-(K+1)y<t\leq y}\lambda '(H_{q}; q, t)}\\
&= C' + O\big((\log x)^{-\delta(1+ \varepsilon)}\big),
\end{align*}where
\[
C' = \sum_{\nu\in \mathcal{N}}\nu C'_{\nu}.
\]From \eqref{SUM.nu.rho} and \eqref{C1.nu.INEQ.BASE} we obtain $10^{2\delta}\leq C' \leq 100$. Thus, \eqref{L3:IV} follows.

We now turn to \eqref{L3:III}. In \cite{Ford.Gabdullin} it was shown that for any positive integer $m$
\begin{equation}\label{Q.N.Iq.EST}
\#\{q: m\textup{ (mod $q$)}  \in I_{q} - I_{q}\}\ll \log (m+1)
\end{equation} (here, as usual, $A-B :=\{a-b: a\in A, b\in B\}$). For any distinct $n, m \in V'$,  we have
\begin{align}\label{PROB.n.m.in.Eq}
\sum_{q\in \mathcal{R}'}& \mathbb{P}(n, m \in \mathbf{e}'_{q})\leq \sum_{q\in \mathcal{R}'}
\bigg(\sum_{1\leq i \leq \nu_{q}} \mathbb{P}(n, m \in \mathbf{e}'_{q,i})+ \sum_{\substack{1 \leq i, j\leq \nu_{q}\\ i\neq j}}\mathbb{P}(n\in \mathbf{e}'_{q,i}, m\in \mathbf{e}'_{q,j})\bigg)\notag\\
&=\sum_{1 \leq i \leq B}\sum_{\substack{q\in \mathcal{R}':\\ i \leq \nu_{q}}} \mathbb{P}(n, m \in \mathbf{e}'_{q,i})
+ \sum_{\substack{1 \leq i, j\leq B\\ i\neq j}}\sum_{\substack{q\in \mathcal{R}':\\ i, j \leq \nu_{q}}}\mathbb{P}(n\in \mathbf{e}'_{q,i}, m\in \mathbf{e}'_{q,j}).
\end{align}If both $n$, $m$ belong to some $\mathbf{e}'_{q,i}$, then $q$ divides $n-m$. But $0<|n-m|<y$ and $q>z> y^{3/4}$, hence there is at most one such $q$. Further, if $n\in \mathbf{e}'_{q,i}$ and $m\in \mathbf{e}'_{q, j}$ for some $q$ and $i\neq j$, then $n-m \equiv \alpha_{q,i} - \alpha_{q,j}$ (mod $q$) and hence $n-m$ (mod $q$) $\in I_{q} - I_{q}$. By \eqref{Q.N.Iq.EST} and the bound $|n-m|<y$, the number of such $q$ is $\ll \log y$. Thus, by \eqref{PROB.n.Eq.1} and \eqref{PROB.n.m.in.Eq},
\[
\sum_{q\in \mathcal{R}'} \mathbb{P}(n, m \in \mathbf{e}'_{q}) \ll y^{-0.9} \log y < y^{-0.5},
\] which gives \eqref{L3:III}.

Thus, all hypotheses of Lemma \ref{L2} hold, and for each $q\in \mathcal{R}'$ there is a number $n'_{q}$ such that if we put
\[
e'_{q, i} = V' \cap \{n'_{q}+ \alpha_{q,i} + qh: 1\leq h\leq KH_{q}\},\quad i=1,\ldots, \nu_{q},
\]and
\[
e'_{q} = \bigsqcup_{1 \leq i \leq \nu_{q}} e'_{q, i},
\]then we have
\[
\# \Big(V'\setminus \bigcup_{q\in \mathcal{R}'} e'_{q}\Big)\leq C_{0}\eta\#V'.
\]By \eqref{S.EST}, we have
\[
\#V'\leq 2 \sigma y \ll \frac{x (\log x)^{\delta}}{\log x}.
\] Thus, by \eqref{ETA.DEF},
\[
\# \Big(V'\setminus \bigcup_{q\in \mathcal{R}'} e'_{q}\Big) \ll \frac{x}{\log x \log\log x}
< \frac{\rho x}{10 \log x}.
\]

Similarly, we put
\[
V'' = \bigcap_{\nu\in \mathcal{N}}\bigcap_{1\leq i\leq \nu} V''_{\nu, i}.
\]From part \textup{(iv)} of Lemma \ref{L1}, we obtain $V'' \subset (S'' \cap [-y, -1])$,
\[
\#((S'' \cap [-y, -1])\setminus V'') \leq  \frac{\rho x}{10 \log x},
\]and if $n\in V''$, then for each $\nu\in \mathcal{N}$ and $i\in\{1,\ldots,\nu\}$ the statement \eqref{L1:lambda.2.struya} holds.

For each $q\in \mathcal{R}''$, we choose a random integer $\mathbf{n}''_{q}$ with probability density function
\[
\mathbb{P}(\mathbf{n}''_{q}=n)=\frac{\lambda '(H_{q}; q, n)}{\sum_{-y\leq t< (K+1)y}\lambda ''(H_{q}; q, t)}\qquad
(-y\leq n < (K+1)y).
\] By \eqref{L1:lambda.2.N} the denominator is non-zero, so that this is a well-defined probability distribution. For $q\in \mathcal{R}''$, we define the random set
\[
\mathbf{e}''_{q} = \bigsqcup_{1 \leq i \leq \nu_{q}} \mathbf{e}''_{q, i},
\]where
\[
\mathbf{e}''_{q, i} = V'' \cap \{\mathbf{n}''_{q}+ \alpha_{q,i} - qh: 1\leq h\leq KH_{q}\},\quad i=1,\ldots, \nu_{q}.
\]In particular, we obtain that $\mathbf{e}''_{q} \subset V''$ for any $q\in \mathcal{R}''$.

We are going to apply Lemma \ref{L2} with $V=V''$, $\{\mathbf{e}_{1},\ldots, \mathbf{e}_{s}\}=\{\mathbf{e}''_{q}:\ q\in \mathcal{R}''\}$, $s=\# \mathcal{R}''$, $K_{0}=BK$, and $\eta$ given by \eqref{ETA.DEF}.
Arguing as above, we see that all hypotheses of Lemma \ref{L2} hold, and hence for each $q\in \mathcal{R}''$ there is a number $n''_{q}$ such that if we set
\[
e''_{q, i} = V'' \cap \{n''_{q}+ \alpha_{q,i} - qh: 1\leq h\leq KH_{q}\},\quad i=1,\ldots, \nu_{q},
\]and
\[
e''_{q} = \bigsqcup_{1 \leq i \leq \nu_{q}} e''_{q, i},
\]then we have
\[
\# \Big(V''\setminus \bigcup_{q\in \mathcal{R}''} e''_{q}\Big)\leq C_{0}\eta\#V''< \frac{\rho x}{10 \log x}.
\]

We recall that if $q\in \mathcal{Q}$, then $z< q \leq x/2$, and $\mathcal{R}' \cap \mathcal{R}'' = \emptyset$. By the Chinese Remainder Theorem, we take $b\equiv n'_{q}$ (mod $q$) for all $q\in \mathcal{R}'$ and $b \equiv -N - n''_{q}$ (mod $q$) for all $q\in \mathcal{R}''$. Therefore
\begin{align*}
\# (S_{x/2}(b)\cap [1,y])&\leq \# ((S'\cap [1, y])\setminus V')+
\# \Big(V'\setminus \bigcup_{q\in \mathcal{R}'}e'_{q}\Big)\\
&\leq \frac{\rho x}{10 \log x} + \frac{\rho x}{10 \log x} = \frac{\rho x}{5 \log x},
\end{align*}and
\begin{align*}
\# (S_{x/2}(-N-b)\cap [-y,-1])&\leq \# ((S''\cap [-y, -1])\setminus V'')+
\# \Big(V''\setminus \bigcup_{q\in \mathcal{R}''}e''_{q}\Big)\\
&\leq \frac{\rho x}{10 \log x} + \frac{\rho x}{10 \log x} = \frac{\rho x}{5 \log x}.
\end{align*}

Let us denote
\begin{align*}
\mathcal{A}'&:= S_{x/2}(b)\cap [1,y],\qquad &&\mathcal{A}'':= S_{x/2}(-N-b)\cap [-y,-1],\\
\mathcal{D}' &:=\{q: x/2< q\leq (3x)/4\},\qquad &&\mathcal{D}'' :=\{q: (3x)/4< q\leq x\}.
\end{align*}
 Then we have
\[
\# \mathcal{D}' > \frac{\rho x}{5\log x}\geq \#\mathcal{A}',\qquad \# \mathcal{D}'' > \frac{\rho x}{5\log x}\geq \#\mathcal{A}''.
\] Hence, we may pair up each element $a\in \mathcal{A}'$ with a unique prime $q(a)\in \mathcal{D}'$ and pair up each element $a\in \mathcal{A}''$ with a unique prime $q(a)\in \mathcal{D}''$. We take $b\equiv a - \alpha_{q(a), 1}$ (mod $q(a)$) for every $a\in \mathcal{A}'$ and $b\equiv -N - a + \alpha_{q(a), 1}$ (mod $q(a)$) for every $a\in \mathcal{A}''$ (applying again the Chinese Remainder Theorem). We obtain
\[
S_{x}(b)\cap [1,y]=\emptyset \qquad\text{and}\qquad S_{x}(-N-b)\cap [-y,-1]=\emptyset.
\]

Let $x=(\log N)/4$ and let $N$ be sufficiently large. By \eqref{SETUP:P.x.EST} and \eqref{SETUP:Def.y},  then
\[
 P(x)\leq N^{1/3}\qquad \text{and}\qquad y \asymp \log N (\log\log N)^{\delta} .
\] Let $b_{1}$ be such that $b_{1}\equiv b$ (mod $P (x)$) and $b_{1}\in [-0.3 N, -0.2 N]$. We have $S_{x}(b_{1})\cap [1,y]=\emptyset$, $S_{x}(-N -b_{1})\cap [-y,-1]=\emptyset$,  and hence
\[
S_{x}\cap (b_2 + [1,y]) = \emptyset,\qquad\text{and}\qquad S_{x}\cap (N-b_2 + [-y,-1]) = \emptyset ,
\]where $b_{2} = -b_{1}$. We set $I_{1}=\{b_{2}+1,\ldots, b_{2}+y\}$ and $I_{2}=\{N-b_{2}- y,\ldots, N-b_{2}-1\}$. Then $I_{1}$ and $I_{2}$ are strings of consecutive positive integers $n \in [0.2 N, 0.8 N]$ of length $y \gg \log N (\log\log N)^{\delta}$ and for any $n\in I_{1}\cup I_{2}$ there is a prime $B<q\leq (\log N)/4$ such that $q$ divides $f(n)$. Since
\[
\min_{0.2 N \leq n \leq 0.8 N} f(n) > \frac{\log N}{4},
\] we have $q< f(n)$, and hence $f(n)$ is composite for all $n\in I_{1}\cup I_{2}$. Finally, we take $n_{1} = b_{2} + [y/2]$ and $n_{2} = N - b_{2} - [y/2]$. This completes the proof of Theorem \ref{T1} assuming Lemma \ref{L1}. Thus we are left to prove Lemma \ref{L1}.

\section{Proof of Lemma \ref{L1}}

We deduce Lemma \ref{L1} from the following statement.

\begin{lemma}\label{L3}
The following statements hold.

\textup{(i)} One has
\begin{align}
\mathbb{E} \#(\mathbf{S}'\cap [1,y])&=\sigma y,\qquad\quad\quad\ \
\mathbb{E} \big(\#(\mathbf{S}'\cap [1,y])\big)^{2}= \bigg(1+ O\bigg(\frac{1}{\log y}\bigg)\bigg)(\sigma y)^{2},\label{S.1.OSNOVA}\\
\mathbb{E} \#(\mathbf{S}''\cap [-y,-1])&=\sigma y,\qquad\quad
\mathbb{E} \big(\#(\mathbf{S}''\cap [-y,-1])\big)^{2}= \bigg(1+ O\bigg(\frac{1}{\log y}\bigg)\bigg)(\sigma y)^{2}.\label{S.2.OSNOVA}
\end{align}

\textup{(ii)} For every $H\in \mathfrak{H}'$, every $\nu\in \mathcal{N}$ and $j\in \{1, 2\}$, we have
\begin{equation}\label{L1:lambda}
\mathbb{E}\sum_{q\in \mathcal{Q}_{H, \nu}} \Bigg(\sum_{-(K+1)y< n\leq y}\bm{\lambda}'(H; q, n)\Bigg)^{j}= \bigg(1 +
 O\bigg(\frac{\log H}{H^{M-2}}\bigg)\bigg) \big((K+2)y\big)^{j}\# \mathcal{Q}_{H,\nu}.
\end{equation}For every $H\in \mathfrak{H}''$, every $\nu\in \mathcal{N}$ and $j\in \{1, 2\}$, we have
\begin{equation}\label{L1:lambda.2}
\mathbb{E}\sum_{q\in \mathcal{Q}_{H, \nu}} \Bigg(\sum_{-y\leq n< (K+1)y}\bm{\lambda}''(H; q, n)\Bigg)^{j}= \bigg(1 +
 O\bigg(\frac{\log H}{H^{M-2}}\bigg)\bigg) \big((K+2)y\big)^{j}\# \mathcal{Q}_{H,\nu}.
\end{equation}

\textup{(iii)} For every $H\in \mathfrak{H}'$, every $\nu \in \mathcal{N}$, $i\in \{1,\ldots, \nu\}$, and $j\in \{1,2\}$, we have
\begin{align}
\mathbb{E}\sum_{n\in \mathbf{S}'\cap [1, y]} \bigg(\sum_{q\in \mathcal{Q}_{H, \nu}} \sum_{h\leq KH}\bm{\lambda}'(H; q, &n-\alpha_{q, i}-qh)\bigg)^{j}\notag\\
 &= \bigg(1 + O\bigg(\frac{\log H}{H^{M-2}}\bigg)\bigg) \bigg(\frac{\#\mathcal{Q}_{H, \nu}[KH]}{\sigma_{2}}\bigg)^{j}\sigma y.\label{L3:lamda.AP.1}
\end{align}

For every $H\in \mathfrak{H}''$, every $\nu \in \mathcal{N}$, $i\in \{1,\ldots, \nu\}$, and $j\in \{1,2\}$, we have
\begin{align}
\mathbb{E}\sum_{n\in \mathbf{S}''\cap [-y, -1]} \bigg(\sum_{q\in \mathcal{Q}_{H, \nu}} \sum_{h\leq KH}\bm{\lambda}''&(H; q, n-\alpha_{q, i}+qh)\bigg)^{j}\notag\\
 &= \bigg(1 + O\bigg(\frac{\log H}{H^{M-2}}\bigg)\bigg) \bigg(\frac{\#\mathcal{Q}_{H, \nu}[KH]}{\sigma_{2}}\bigg)^{j}\sigma y.\label{L3:lamda.AP.2}
\end{align}

\end{lemma}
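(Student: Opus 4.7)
The plan is to deduce Lemma \ref{L3} by standard sieve-theoretic moment computations exploiting the uniform distribution of $\mathbf{b}$ modulo $P(z)$ (and the independence of $\mathbf{b}_1$, $\mathbf{b}_2$) via the Chinese Remainder Theorem, following the template of \cite{FKMPT} and \cite{Ford.Gabdullin}.

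For part \textup{(i)}, the first moment is immediate: for any integer $n$ the CRT decomposes $\{n\in \mathbf{S}'\}$ as the independent conjunction over $q\leq z$ of the events $\{\mathbf{b}\not\equiv n-\alpha \pmod q\text{ for any }\alpha \in I_q\}$, each of probability $1-\#I_q/q$, giving $\mathbb{P}(n\in \mathbf{S}')=\sigma$. For the second moment, the diagonal $\sum_{n_1=n_2}$ contributes $\sigma y \ll y/\log y$, which fits into the error. For $n_1\neq n_2$ the joint probability factors (again by CRT) as $\sigma^2\prod_{q\leq z}\phi_q(n_1-n_2)$, where $\phi_q=1+O((\#I_q/q)^2)$ for ``generic'' primes (those with $n_1-n_2\bmod q \notin I_q-I_q$) and $\phi_q=1+O(\#I_q/q)$ for the $O(\log y)$ ``bad'' primes counted by \eqref{Q.N.Iq.EST}. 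The resulting bound $\sum_q \log\phi_q = O(1/\log y)$ uniformly in $n_1-n_2$ yields the claim; the analysis for $\mathbf{S}''$ is identical after the substitution $\mathbf{b}\mapsto -N-\mathbf{b}$.

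For parts \textup{(ii)} and \textup{(iii)} I would condition on $\mathbf{b}_1$, so that $\mathbf{S}'_1$ and $A:=\mathbf{AP}'(KH;q,n)$ are fixed. Then $\mathbb{E}[\bm{\lambda}'(H;q,n)\mid \mathbf{b}_1]=\sigma_2^{-\#A}\mathbb{P}(A\subset \mathbf{S}'_2)$, and by independence of $\mathbf{b}_2$ across primes, $\mathbb{P}(A\subset \mathbf{S}'_2)=\prod_{H^M<q'\leq z}(1-U_{q'}(A)/q')$, with $U_{q'}(A)=\#\bigcup_{a\in A}(a-I_{q'})\bmod q'$. When $A$ has no collision mod $q'$, i.e.\ $U_{q'}(A)=\#A\cdot \#I_{q'}$, the local factor equals $(1-\#I_{q'}/q')^{\#A}(1+O(\#A^2/q'^2))$. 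Using $\#A\leq BKH$, $q'>H^M$ with $M>6$, and the bound \eqref{Q.N.Iq.EST} for the number of collision primes, the full product equals $\sigma_2^{\#A}(1+O(\log H/H^{M-2}))$. Summing over $n$ and $q\in \mathcal{Q}_{H,\nu}$ gives the $j=1$ case of \eqref{L1:lambda}; the $j=2$ case follows by the same procedure applied to the union $A_1\cup A_2$ over pairs $(n_1,n_2)$, with ``inter-AP'' collisions $n_1-n_2+\alpha_{q,i}-\alpha_{q,j}+q(h_1-h_2)\in I_{q'}-I_{q'}\bmod q'$ again controlled by \eqref{Q.N.Iq.EST}. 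Part \textup{(iii)} proceeds analogously, except that the constraint $n\in \mathbf{S}'=\mathbf{S}'_1\cap \mathbf{S}'_2$ merges with $\{A\subset \mathbf{S}'_2\}$: adjoining $n$ to $A$ raises $\#A$ by at most one and introduces the extra factor $1/\sigma_2$ on the right-hand side via the first-moment count $\sigma y$ from part \textup{(i)}, producing $\#\mathcal{Q}_{H,\nu}[KH]/\sigma_2$ as claimed.

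The main obstacle is the bookkeeping of collision-type error terms in the $j=2$ case, where one must simultaneously track collisions within each of $A_1$, $A_2$, between them, and (in part \textup{(iii)}) with $n_1, n_2$ themselves. The resolution is to combine the per-prime bound $1+O(\#A^2/q'^2)$ with the asymptotic $\sigma_2^{-1}\sim (\log x)/(M\log H)$ from \eqref{SIGMA.2}, translating a uniform $O(1/q'^2)$ local error into the claimed global factor $O(\log H/H^{M-2})$. The choice of split parameter $H^M$ with $M>6$ is calibrated precisely so that $\#A\leq BKH$ is much smaller than the smallest prime $q'>H^M$ appearing in $\mathbf{S}'_2$, ensuring that all collision probabilities are uniformly small and the harmonic sums over $q'$ remain finite.
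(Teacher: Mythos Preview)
Your plan for parts \textup{(ii)} and \textup{(iii)} is essentially what the paper cites from \cite{Ford.Gabdullin}: condition on $\mathbf{b}_1$, compare $\mathbb{P}(A\subset\mathbf{S}'_2)$ to $\sigma_2^{\#A}$ prime by prime, and absorb collision primes via \eqref{Q.N.Iq.EST}. That outline is sound.

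The second-moment argument in part \textup{(i)}, however, does not work as written. You claim that the correlation factor $\prod_{q\le z}\phi_q(n_1-n_2)=1+O(1/\log y)$ \emph{uniformly} in $n_1-n_2$, but this is false. For generic primes one has $\phi_q=1-(\#I_q)^2/(q-\#I_q)^2+O(q^{-3})$, and $\sum_{q\le z}(\#I_q)^2/q^2$ converges to a positive constant independent of $y$; thus the product over generic primes tends to a fixed constant strictly below $1$, not to $1+o(1)$. A concrete instance: for $f(n)=n$ one has $\phi_2(m)=0$ whenever $m$ is odd, so $g(m):=\prod_q\phi_q(m)$ vanishes on half of all differences, and for $m=2$ one gets $g(2)=\tfrac{2}{1}\prod_{2<q\le z}\tfrac{q(q-2)}{(q-1)^2}$, which converges to twice the twin-prime constant, again not $1+o(1)$. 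Your pointwise bound would at best yield $\mathbb{E}X^2\asymp(\sigma y)^2$, not the asymptotic $(1+O(1/\log y))(\sigma y)^2$.

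What the cited proof in \cite{FKMPT} actually does is average the correlation over pairs: one shows that $\sum_{0<|m|<y}(y-|m|)\,g(m)=y^2+O(y^2/\log y)$, using that the mean of $g$ over a full residue system modulo $P$ is exactly $1$ (equivalently, $\sum_{m\bmod q}(q-c_q(m))=(q-\#I_q)^2$ for each $q$) together with a multiplicative expansion to control the partial sum. The deviation of $g$ from $1$ cancels on average even though it does not vanish pointwise; this averaging step is the missing idea in your sketch of \textup{(i)}.
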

\begin{proof}
The statement \eqref{S.1.OSNOVA} is part \textup{(i)} of Theorem 3 in \cite{FKMPT}, and the statements \eqref{L1:lambda} and \eqref{L3:lamda.AP.1} are parts \textup{(ii)} and \textup{(iii)} of Theorem 4 in \cite{Ford.Gabdullin}.  The statements \eqref{S.2.OSNOVA},  \eqref{L1:lambda.2} and \eqref{L3:lamda.AP.2} are proved similarly.
\end{proof}

 From part \textup{(i)} of Lemma \ref{L3} we have
\[
\mathbb{E}(\#(\mathbf{S}'\cap [1, y]) - \sigma y)^{2}\ll \frac{(\sigma y)^{2}}{\log y}\qquad\text{and}\qquad
\mathbb{E}(\#(\mathbf{S}''\cap [-y, -1]) - \sigma y)^{2}\ll \frac{(\sigma y)^{2}}{\log y}.
\]Hence by Chebyshev's inequality, we see that
\begin{align}
\mathbb{P}( \#(\mathbf{S}'\cap [1, y])\leq 2\sigma y)&= 1 - O\left(\frac{1}{\log x}\right),\label{PROB.S.EST}\\
\mathbb{P}( \#(\mathbf{S}''\cap [-y, -1])\leq 2\sigma y)&= 1 - O\left(\frac{1}{\log x}\right).\label{PROB.S.EST.2}
\end{align}

Fix $H\in \mathfrak{H}'$ and $\nu \in \mathcal{N}$. From \eqref{L1:lambda} we have
\begin{equation}\label{SUMMA.LAMBDA.1.M}
\mathbb{E}\sum_{q\in \mathcal{Q}_{H, \nu}}\Bigg(\sum_{-(K+1)y< n\leq y}\bm{\lambda}'(H;q,n)- (K+2)y\Bigg)^{2}\ll \frac{y^{2}\#\mathcal{Q}_{H,\nu}\log H}{H^{M-2}}\ll \frac{y^{2}\#\mathcal{Q}_{H,\nu}}{H^{M-2-\varepsilon}}.
\end{equation}We put
\[
\bm{\mathcal{R}}'_{H,\nu}=\Big\{q\in \mathcal{Q}_{H,\nu}: \Big|\sum_{-(K+1)y<n\leq y}\bm{\lambda}'(H;q,n)- (K+2)y\Big|\leq \frac{y}{H^{1+\varepsilon}}\Big\}.
\] We have
\begin{align}
\mathbb{E}&\# (\mathcal{Q}_{H,\nu}\setminus \bm{\mathcal{R}}'_{H,\nu})=\mathbb{E}\sum_{q\in \mathcal{Q}_{H,\nu}\setminus \bm{\mathcal{R}}'_{H,\nu}} 1\notag\\
&=\mathbb{E}\sum_{q\in \mathcal{Q}_{H,\nu}\setminus \bm{\mathcal{R}}'_{H,\nu}} \frac{\bigg(\sum_{-(K+1)y< n\leq y}\bm{\lambda}'(H;q,n)- (K+2)y\bigg)^{2}}{\bigg(\sum_{-(K+1)y< n\leq y}\bm{\lambda}'(H;q,n)- (K+2)y\bigg)^{2}}\notag\\
&\leq \frac{H^{2+2\varepsilon}}{y^{2}}\mathbb{E}\sum_{q\in \mathcal{Q}_{H,\nu}}\Bigg(\sum_{-(K+1)y< n\leq y}\bm{\lambda}'(H;q,n)- (K+2)y\Bigg)^{2}
\ll \frac{\# \mathcal{Q}_{H,\nu}}{H^{M-4-3\varepsilon}}.\label{EXP.R.H.NU.1}
\end{align}By Markov's inequality, we have
\[
\mathbb{P}\bigg(\# (\mathcal{Q}_{H,\nu}\setminus \bm{\mathcal{R}}'_{H,\nu})\leq \frac{\# \mathcal{Q}_{H,\nu}}{H^{M-4-4\varepsilon}}\bigg)=1- O(H^{-\varepsilon}).
\]We observe that for any $\beta>0$ we have (see \ref{H_range})
\begin{equation}\label{L2:H_alpha}
\sum_{H\in \mathfrak{H}} H^{-\beta}\leq \frac{1}{\big((\log x)^{\delta}\big)^{\beta}}+
\frac{1}{\big(\xi(\log x)^{\delta}\big)^{\beta}}+\frac{1}{\big(\xi^{2}(\log x)^{\delta}\big)^{\beta}}+\ldots\ll_{\beta} (\log x)^{-\delta \beta}.
\end{equation}Hence, with probability $1-O((\log x)^{-\delta\varepsilon})$ the relation
\begin{equation}\label{BASIC.I}
\# (\mathcal{Q}_{H,\nu}\setminus \bm{\mathcal{R}}'_{H,\nu})\leq \frac{\# \mathcal{Q}_{H,\nu}}{H^{M-4-4\varepsilon}}
\end{equation}holds for every $H\in\mathfrak{H}'$ and $\nu\in \mathcal{N}$ simultaneously. We put
\[
\bm{\mathcal{R}}'_{\nu}=\bigcup_{H\in \mathfrak{H}'} \bm{\mathcal{R}}'_{H,\nu}.
\]Since $H\geq (\log x)^{\delta}$ for any $H\in \mathfrak{H}$, we have for any $q\in \bm{\mathcal{R}}'_{\nu}$
\begin{equation}\label{SUM.LAMBDA.1}
\sum_{-(K+1)y< n \leq y} \bm{\lambda}' (H_{q}; q, n) = \bigg(1+O\bigg(\frac{1}{(\log x)^{\delta (1+\varepsilon)}}\bigg)\bigg)(K+2)y.
\end{equation}

Fix $H\in \mathfrak{H}''$ and $\nu \in \mathcal{N}$. From \eqref{L1:lambda.2} we have
\begin{equation}\label{SUMMA.LAMBDA.2.M}
\mathbb{E}\sum_{q\in \mathcal{Q}_{H, \nu}}\Bigg(\sum_{-y\leq n< (K+1)y}\bm{\lambda}''(H;q,n)- (K+2)y\Bigg)^{2}\ll \frac{y^{2}\#\mathcal{Q}_{H,\nu}\log H}{H^{M-2}}\ll \frac{y^{2}\#\mathcal{Q}_{H,\nu}}{H^{M-2-\varepsilon}}.
\end{equation}We put
\[
\bm{\mathcal{R}}''_{H,\nu}=\Big\{q\in \mathcal{Q}_{H,\nu}: \Big|\sum_{-y\leq n < (K+1)y}\bm{\lambda}''(H;q,n)- (K+2)y\Big|\leq \frac{y}{H^{1+\varepsilon}}\Big\}.
\] Similarly, we have
\begin{equation}\label{STRIKE.2}
\mathbb{E}\# (\mathcal{Q}_{H,\nu}\setminus \bm{\mathcal{R}}''_{H,\nu})
\ll \frac{\# \mathcal{Q}_{H,\nu}}{H^{M-4-3\varepsilon}},
\end{equation}and with probability $1-O((\log x)^{-\delta\varepsilon})$ the relation
\begin{equation}\label{BASIC.II}
\# (\mathcal{Q}_{H,\nu}\setminus \bm{\mathcal{R}}''_{H,\nu})\leq \frac{\# \mathcal{Q}_{H,\nu}}{H^{M-4-4\varepsilon}}
\end{equation}holds for every $H\in\mathfrak{H}''$ and $\nu\in \mathcal{N}$ simultaneously. We put
\[
\bm{\mathcal{R}}''_{\nu}=\bigcup_{H\in \mathfrak{H}''} \bm{\mathcal{R}}''_{H,\nu}.
\]For any $q\in \bm{\mathcal{R}}''_{\nu}$
\begin{equation}\label{SUM.LAMBDA.2}
\sum_{-y\leq n < (K+1)y} \bm{\lambda}'' (H_{q}; q, n) = \bigg(1+O\bigg(\frac{1}{(\log x)^{\delta (1+\varepsilon)}}\bigg)\bigg)(K+2)y.
\end{equation}

We work on part \textup{(iii)} of Lemma \ref{L1} using part \textup{(iii)} of Lemma \ref{L3} in a similar fashion to previous arguments. Fix $H\in \mathfrak{H}'$, $\nu\in \mathcal{N}$, and $i\in\{1,\ldots,\nu\}$. From \eqref{L3:lamda.AP.1} we have
\begin{align*}
\mathbb{E}&\sum_{n\in \mathbf{S}'\cap [1,y]} \Bigg( \sum_{q\in \mathcal{Q}_{H,\nu}}\sum_{h\leq KH}\bm{\lambda}'(H;q, n-\alpha_{q, i}-qh) - \frac{\#\mathcal{Q}_{H,\nu} [KH]}{\sigma_{2}}\Bigg)^{2}\\
&\ll \frac{\log H}{H^{M-2}} \bigg(\frac{\#\mathcal{Q}_{H,\nu}[KH]}{\sigma_2}\bigg)^{2}\sigma y
\ll \frac{1}{H^{M-2 - \varepsilon}} \bigg(\frac{\#\mathcal{Q}_{H,\nu}[KH]}{\sigma_2}\bigg)^{2}\sigma y.
\end{align*}We put
\begin{align}
\bm{\mathcal{E}}'_{H, \nu, i}=\Bigg\{&n\in \mathbf{S}'\cap [1,y]:\notag\\
 &\Bigg|\sum_{q\in \mathcal{Q}_{H,\nu}}\sum_{h\leq K H}\bm{\lambda}'(H;q, n-\alpha_{q, i}-qh) - \frac{\#\mathcal{Q}_{H, \nu} [KH]}{\sigma_{2}}\Bigg|\geq \frac{\#\mathcal{Q}_{H, \nu} [KH]}{\sigma_{2} H^{1+ \varepsilon}}\Bigg\}.\label{E.1.DEF}
\end{align}Since $M>6$ and $\varepsilon < (M-6)/7$, we have
\begin{align*}
&\mathbb{E} \#\bm{\mathcal{E}}'_{H,\nu, i}=\mathbb{E}\sum_{n\in \bm{\mathcal{E}}'_{H,\nu, i}} 1\\
&=
\mathbb{E}\sum_{n\in \bm{\mathcal{E}}'_{H,\nu,i}}\frac{\big(\sum_{q\in \mathcal{Q}_{H,\nu}}\sum_{h\leq KH}\bm{\lambda}'(H;q, n-\alpha_{q,i}-qh) - (\#\mathcal{Q}_{H,\nu} [KH])/\sigma_{2}\big)^{2}}{\big(\sum_{q\in \mathcal{Q}_{H,\nu}}\sum_{h\leq KH}\bm{\lambda}'(H;q, n-\alpha_{q,i}-qh) - (\#\mathcal{Q}_{H,\nu} [KH])/\sigma_{2}\big)^{2}}\\
&\leq \frac{\sigma_{2}^{2}H^{2 +2\varepsilon}}{(\#\mathcal{Q}_{H,\nu}[KH])^{2}}
\mathbb{E}\sum_{n\in \mathbf{S}'\cap [1,y]}\Bigg(\sum_{q\in \mathcal{Q}_{H,\nu}}\sum_{h\leq KH}\bm{\lambda}'(H;q, n-\alpha_{q,i}-qh) - \frac{\#\mathcal{Q}_{H,\nu} [KH]}{\sigma_{2}}\Bigg)^{2}\\
&\qquad\qquad\qquad\qquad\qquad\qquad\qquad\qquad\qquad\qquad\qquad\qquad\quad\quad\ \ \
\ll \frac{\sigma y}{H^{M-4-3\varepsilon}}\ll \frac{\sigma y}{H^{2}}.
\end{align*}By Markov's inequality, we have
\begin{equation}\label{MARKOV.1}
\mathbb{P}\Big(\# \bm{\mathcal{E}}'_{H,\nu,i}\leq \frac{\sigma y}{H^{1+\varepsilon}}\Big)= 1-O\Big(\frac{1}{H^{1- \varepsilon}}\Big).
\end{equation}

We next estimate the contribution from ``bad'' primes $q\in \mathcal{Q}_{H,\nu}\setminus \bm{\mathcal{R}}'_{H,\nu}$. For any $h\leq KH$, by the Cauchy-Schwarz inequality we have
\begin{align}
\mathbb{E}\sum_{n\in \mathbf{S}'\cap [1,y]} &\sum_{q\in \mathcal{Q}_{H,\nu}\setminus \bm{\mathcal{R}}'_{H,\nu}} \bm{\lambda}'(H; q, n-\alpha_{q,i}-qh)\leq \big(\mathbb{E}\# (\mathcal{Q}_{H,\nu}\setminus \bm{\mathcal{R}}'_{H,\nu})\big)^{1/2}\cdot\notag\\
&\cdot\Bigg(\mathbb{E} \sum_{q\in \mathcal{Q}_{H,\nu}\setminus \bm{\mathcal{R}}'_{H,\nu}} \bigg(\sum_{n=1}^{y} \bm{\lambda}'(H; q, n-\alpha_{q, i}-qh)\bigg)^{2}\Bigg)^{1/2}.\label{CAUSHY.LAMBDA.1}
\end{align}Given $q\in \mathcal{Q}_{H,\nu}\setminus \bm{\mathcal{R}}'_{H,\nu}$, we have
\[
\bigg|\sum_{n=1}^{y} \bm{\lambda}'(H; q, n-\alpha_{q,i}-qh)\bigg|\leq \bigg|\sum_{n=1}^{y} \bm{\lambda}'(H; q, n-\alpha_{q, i}-qh)- (K+2)y\bigg| + (K+2)y.
\] Applying \eqref{n.qh.range.1}, we obtain
\begin{align*}
\bigg|\sum_{n=1}^{y} \bm{\lambda}'(H; q, n&-\alpha_{q, i}-qh)- (K+2)y\bigg|\\
 &\leq\max\Bigg((K+2)y, \bigg|\sum_{-(K+1)y<n\leq y} \bm{\lambda}'(H; q, n)- (K+2)y\bigg|\Bigg)\\
 &\leq\bigg|\sum_{-(K+1)y<n\leq y} \bm{\lambda}'(H; q, n)- (K+2)y\bigg|+ (K+2)y.
\end{align*}

Since $(a+b)^{2}\leq 2 (a^{2}+b^{2})$, we have
\[
\bigg(\sum_{n=1}^{y} \bm{\lambda}'(H; q, n-\alpha_{q,i}-qh)\bigg)^{2}\leq 2\bigg(\sum_{-(K+1)y<n\leq y} \bm{\lambda}'(H; q, n)- (K+2)y\bigg)^{2}+ 8(K+2)^{2}y^{2}.
\]Applying \eqref{SUMMA.LAMBDA.1.M} and \eqref{EXP.R.H.NU.1}, we obtain
\begin{align*}
\mathbb{E} \sum_{q\in \mathcal{Q}_{H,\nu}\setminus \bm{\mathcal{R}}'_{H,\nu}} &\bigg(\sum_{n=1}^{y} \bm{\lambda}'(H; q, n-\alpha_{q,i}-qh)\bigg)^{2}\\
 &\leq 2\mathbb{E} \sum_{q\in \mathcal{Q}_{H,\nu}\setminus \bm{\mathcal{R}}'_{H,\nu}} \bigg(\sum_{-(K+1)y<n\leq y} \bm{\lambda}'(H; q, n)- (K+2)y\bigg)^{2}\\
 &+ 8\big((K+2)y\big)^{2}\mathbb{E}\# (\mathcal{Q}_{H,\nu}\setminus \bm{\mathcal{R}}'_{H,\nu})
 \ll \frac{y^{2}\#\mathcal{Q}_{H,\nu}}{H^{M-2 -\varepsilon}} + \frac{y^{2}\#\mathcal{Q}_{H,\nu}}{H^{M-4 - 3\varepsilon}}\ll
 \frac{y^{2}\#\mathcal{Q}_{H,\nu}}{H^{M-4-3\varepsilon}}.
\end{align*} We see from \eqref{CAUSHY.LAMBDA.1} and \eqref{EXP.R.H.NU.1} that
\[
\mathbb{E}\sum_{n\in \mathbf{S}'\cap [1,y]} \sum_{q\in \mathcal{Q}_{H,\nu}\setminus \bm{\mathcal{R}}'_{H,\nu}} \bm{\lambda}'(H; q, n-\alpha_{q,i}-qh)\ll \frac{y \#\mathcal{Q}_{H,\nu}}{H^{M-4 - 3\varepsilon}}.
\]By summing over $h\leq KH$, we get
\[
\mathbb{E}\sum_{n\in \mathbf{S}'\cap [1,y]} \sum_{q\in \mathcal{Q}_{H,\nu}\setminus \bm{\mathcal{R}}'_{H,\nu}}
 \sum_{h\leq KH}\bm{\lambda}'(H; q, n-\alpha_{q,i}-qh)\ll \frac{y \#\mathcal{Q}_{H,\nu}}{H^{M-5 - 3\varepsilon}}.
\]We put
\begin{equation}\label{T.1.DEF}
\bm{\mathcal{T}}'_{H,\nu,i}=\bigg\{n\in \mathbf{S}'\cap [1, y]:
\sum_{q\in \mathcal{Q}_{H,\nu}\setminus \bm{\mathcal{R}}'_{H,\nu}}
 \sum_{h\leq KH}\bm{\lambda}'(H; q, n-\alpha_{q,i}-qh)\geq \frac{ \#\mathcal{Q}_{H,\nu}[KH]}{H^{1+\varepsilon}\sigma_2}\bigg\}.
\end{equation}We have
\begin{align*}
\mathbb{E} \#\bm{\mathcal{T}}'_{H,\nu,i}&=
\mathbb{E}\sum_{n\in \bm{\mathcal{T}}'_{H,\nu,i}} \frac{\sum_{q\in \mathcal{Q}_{H,\nu}\setminus \bm{\mathcal{R}}'_{H,\nu}}
 \sum_{h\leq KH}\bm{\lambda}'(H; q, n-\alpha_{q,i}-qh)}{\sum_{q\in \mathcal{Q}_{H,\nu}\setminus \bm{\mathcal{R}}'_{H,\nu}}
 \sum_{h\leq KH}\bm{\lambda}'(H; q, n-\alpha_{q,i}-qh)}\\
 &\leq \frac{H^{1+\varepsilon}\sigma_2}{\#\mathcal{Q}_{H,\nu}[KH]}
 \mathbb{E}\sum_{n\in \mathbf{S}'\cap [1,y]}\sum_{q\in \mathcal{Q}_{H,\nu}\setminus \bm{\mathcal{R}}'_{H,\nu}}
 \sum_{h\leq KH}\bm{\lambda}'(H; q, n-\alpha_{q,i}-qh)\\
 &\ll \frac{y\sigma_2}{H^{M-5-4\varepsilon}} \ll \sigma y \frac{\log H}{H^{M-5 - 4\varepsilon}}\ll \frac{\sigma y}{H^{M-5-5\varepsilon}}.
\end{align*}By Markov's inequality, we have
\begin{equation}\label{MARKOV.2}
\mathbb{P}\bigg(\#\bm{\mathcal{T}}'_{H,\nu,i} \leq \frac{\sigma y}{H^{1+\varepsilon}}\bigg)=1-O\Big(\frac{1}{H^{M-6-6\varepsilon}}\Big).
\end{equation} We see from \eqref{L2:H_alpha}, \eqref{MARKOV.1} and \eqref{MARKOV.2} that with probability $1 - O\big((\log x)^{-\delta \gamma}\big)$, where
\[
\gamma = \min (1-\varepsilon, M-6-6\varepsilon) >0,
\] the relations
\[
\#\bm{\mathcal{E}}'_{H,\nu,i} \leq \frac{\sigma y}{H^{1+\varepsilon}},\qquad
\#\bm{\mathcal{T}}'_{H,\nu,i} \leq \frac{\sigma y}{H^{1+\varepsilon}}
\]hold for every $H\in \mathfrak{H}'$, $\nu\in \mathcal{N}$, and $i\in\{1,\ldots,\nu\}$ simultaneously.

Fix $H\in \mathfrak{H}''$, $\nu\in \mathcal{N}$, and $i\in\{1,\ldots,\nu\}$. From \eqref{L3:lamda.AP.2} we have
\begin{align*}
\mathbb{E}&\sum_{n\in \mathbf{S}''\cap [-y,-1]} \Bigg( \sum_{q\in \mathcal{Q}_{H,\nu}}\sum_{h\leq KH}\bm{\lambda}''(H;q, n-\alpha_{q, i}+qh) - \frac{\#\mathcal{Q}_{H,\nu} [KH]}{\sigma_{2}}\Bigg)^{2}\\
&\ll \frac{\log H}{H^{M-2}} \bigg(\frac{\#\mathcal{Q}_{H,\nu}[KH]}{\sigma_2}\bigg)^{2}\sigma y
\ll \frac{1}{H^{M-2 - \varepsilon}} \bigg(\frac{\#\mathcal{Q}_{H,\nu}[KH]}{\sigma_2}\bigg)^{2}\sigma y.
\end{align*}We put
\begin{align}
\bm{\mathcal{E}}''_{H, \nu, i}=\Bigg\{&n\in \mathbf{S}''\cap [-y,-1]:\notag\\
 &\Bigg|\sum_{q\in \mathcal{Q}_{H,\nu}}\sum_{h\leq K H}\bm{\lambda}''(H;q, n-\alpha_{q, i}+qh) - \frac{\#\mathcal{Q}_{H, \nu} [KH]}{\sigma_{2}}\Bigg|\geq \frac{\#\mathcal{Q}_{H, \nu} [KH]}{\sigma_{2} H^{1+ \varepsilon}}\Bigg\}.\label{E.2.DEF}
\end{align}Arguing as above, we obtain
\[
\mathbb{E} \#\bm{\mathcal{E}}''_{H,\nu, i}
\ll \frac{\sigma y}{H^{M-4-3\varepsilon}}\ll \frac{\sigma y}{H^{2}}.
\]By Markov's inequality, we have
\begin{equation}\label{MARKOV.3}
\mathbb{P}\Big(\# \bm{\mathcal{E}}''_{H,\nu,i}\leq \frac{\sigma y}{H^{1+\varepsilon}}\Big)= 1-O\Big(\frac{1}{H^{1- \varepsilon}}\Big).
\end{equation}

We next estimate the contribution from ``bad'' primes $q\in \mathcal{Q}_{H,\nu}\setminus \bm{\mathcal{R}}''_{H,\nu}$. For any $h\leq KH$, by the Cauchy-Schwarz inequality we have
\begin{align}
\mathbb{E}\sum_{n\in \mathbf{S}''\cap [-y,-1]} &\sum_{q\in \mathcal{Q}_{H,\nu}\setminus \bm{\mathcal{R}}''_{H,\nu}} \bm{\lambda}''(H; q, n-\alpha_{q,i}+qh)\leq \big(\mathbb{E}\# (\mathcal{Q}_{H,\nu}\setminus \bm{\mathcal{R}}''_{H,\nu})\big)^{1/2}\cdot\notag\\
&\cdot\Bigg(\mathbb{E} \sum_{q\in \mathcal{Q}_{H,\nu}\setminus \bm{\mathcal{R}}''_{H,\nu}} \bigg(\sum_{-y\leq n\leq -1} \bm{\lambda}'(H; q, n-\alpha_{q, i}+qh)\bigg)^{2}\Bigg)^{1/2}.\label{CAUSHY.LAMBDA.2}
\end{align}Given $q\in \mathcal{Q}_{H,\nu}\setminus \bm{\mathcal{R}}''_{H,\nu}$, we have
\[
\bigg|\sum_{-y\leq n\leq -1} \bm{\lambda}''(H; q, n-\alpha_{q,i}+qh)\bigg|\leq \bigg|\sum_{-y\leq n\leq -1} \bm{\lambda}''(H; q, n-\alpha_{q, i}+qh)- (K+2)y\bigg| + (K+2)y.
\] Applying \eqref{n.qh.range.2}, we obtain
\begin{align*}
\bigg|\sum_{-y\leq n\leq -1} \bm{\lambda}''(H; q, n&-\alpha_{q, i}+qh)- (K+2)y\bigg|\\
 &\leq\max\Bigg((K+2)y, \bigg|\sum_{-y \leq n< (K+1)y} \bm{\lambda}''(H; q, n)- (K+2)y\bigg|\Bigg)\\
 &\leq\bigg|\sum_{-y \leq n < (K+1)y} \bm{\lambda}''(H; q, n)- (K+2)y\bigg|+ (K+2)y.
\end{align*}

Since $(a+b)^{2}\leq 2 (a^{2}+b^{2})$, we get
\[
\bigg(\sum_{-y\leq n\leq -1} \bm{\lambda}''(H; q, n-\alpha_{q,i}+qh)\bigg)^{2}\leq 2\bigg(\sum_{-y \leq n< (K+1)y} \bm{\lambda}''(H; q, n)- (K+2)y\bigg)^{2}+ 8(K+2)^{2}y^{2}.
\]Applying \eqref{SUMMA.LAMBDA.2.M} and \eqref{STRIKE.2}, we obtain
\begin{align*}
\mathbb{E} \sum_{q\in \mathcal{Q}_{H,\nu}\setminus \bm{\mathcal{R}}''_{H,\nu}} &\bigg(\sum_{-y\leq n\leq -1} \bm{\lambda}''(H; q, n-\alpha_{q,i}+qh)\bigg)^{2}\\
 &\leq 2\mathbb{E} \sum_{q\in \mathcal{Q}_{H,\nu}\setminus \bm{\mathcal{R}}''_{H,\nu}} \bigg(\sum_{-y\leq n< (K+1) y} \bm{\lambda}''(H; q, n)- (K+2)y\bigg)^{2}\\
 &+ 8\big((K+2)y\big)^{2}\mathbb{E}\# (\mathcal{Q}_{H,\nu}\setminus \bm{\mathcal{R}}''_{H,\nu})
 \ll \frac{y^{2}\#\mathcal{Q}_{H,\nu}}{H^{M-2 -\varepsilon}} + \frac{y^{2}\#\mathcal{Q}_{H,\nu}}{H^{M-4 - 3\varepsilon}}\ll
 \frac{y^{2}\#\mathcal{Q}_{H,\nu}}{H^{M-4-3\varepsilon}}.
\end{align*} By \eqref{CAUSHY.LAMBDA.2} and \eqref{STRIKE.2}, we have
\[
\mathbb{E}\sum_{n\in \mathbf{S}''\cap [-y,-1]} \sum_{q\in \mathcal{Q}_{H,\nu}\setminus \bm{\mathcal{R}}''_{H,\nu}} \bm{\lambda}''(H; q, n-\alpha_{q,i}+qh)\ll \frac{y \#\mathcal{Q}_{H,\nu}}{H^{M-4 - 3\varepsilon}}.
\]By summing over $h\leq KH$, we get
\[
\mathbb{E}\sum_{n\in \mathbf{S}''\cap [-y,-1]} \sum_{q\in \mathcal{Q}_{H,\nu}\setminus \bm{\mathcal{R}}''_{H,\nu}}
 \sum_{h\leq KH}\bm{\lambda}''(H; q, n-\alpha_{q,i}+qh)\ll \frac{y \#\mathcal{Q}_{H,\nu}}{H^{M-5 - 3\varepsilon}}.
\]We put
\begin{equation}\label{T.2.DEF}
\bm{\mathcal{T}}''_{H,\nu,i}=\bigg\{n\in \mathbf{S}''\cap [-y, -1]:
\sum_{q\in \mathcal{Q}_{H,\nu}\setminus \bm{\mathcal{R}}''_{H,\nu}}
 \sum_{h\leq KH}\bm{\lambda}''(H; q, n-\alpha_{q,i}+qh)\geq \frac{ \#\mathcal{Q}_{H,\nu}[KH]}{H^{1+\varepsilon}\sigma_2}\bigg\}.
\end{equation}We see that
\[
\mathbb{E} \#\bm{\mathcal{T}}''_{H,\nu,i}\ll \frac{y\sigma_2}{H^{M-5-4\varepsilon}} \ll \sigma y \frac{\log H}{H^{M-5 - 4\varepsilon}}\ll \frac{\sigma y}{H^{M-5-5\varepsilon}}.
\]By Markov's inequality, we have
\begin{equation}\label{MARKOV.4}
\mathbb{P}\bigg(\#\bm{\mathcal{T}}''_{H,\nu,i} \leq \frac{\sigma y}{H^{1+\varepsilon}}\bigg)=1-O\Big(\frac{1}{H^{M-6-6\varepsilon}}\Big).
\end{equation} We see from \eqref{L2:H_alpha}, \eqref{MARKOV.3} and \eqref{MARKOV.4} that with probability $1 - O\big((\log x)^{-\delta \gamma}\big)$, where
\[
\gamma = \min (1-\varepsilon, M-6-6\varepsilon) >0,
\] the relations
\[
\#\bm{\mathcal{E}}''_{H,\nu,i} \leq \frac{\sigma y}{H^{1+\varepsilon}},\qquad
\#\bm{\mathcal{T}}''_{H,\nu,i} \leq \frac{\sigma y}{H^{1+\varepsilon}}
\]hold for every $H\in \mathfrak{H}''$, $\nu\in \mathcal{N}$, and $i\in\{1,\ldots,\nu\}$ simultaneously.

Now we make a choice of $b$ (mod $P(z)$). We consider the event that $\# (\mathbf{S}' \cap [1,y])\leq 2\sigma y$ and that $\# (\mathbf{S}'' \cap [-y,-1])\leq 2\sigma y$, that for each $H\in \mathfrak{H}'$, $\nu \in \mathcal{N}$, $i\leq \nu$, the sets $\bm{\mathcal{E}}'_{H,\nu,i}$, $\bm{\mathcal{T}}'_{H,\nu,i}$ have size at most $(\sigma y) H^{-1-\varepsilon}$, that for each $H\in \mathfrak{H}''$, $\nu \in \mathcal{N}$, $i\leq \nu$, the sets $\bm{\mathcal{E}}''_{H,\nu,i}$, $\bm{\mathcal{T}}''_{H,\nu,i}$ have size at most $(\sigma y) H^{-1-\varepsilon}$, that for each $H\in\mathfrak{H}'$ and $\nu\in\mathcal{N}$ the relation \eqref{BASIC.I} holds, and that for each $H\in\mathfrak{H}''$ and $\nu\in \mathcal{N}$ the relation \eqref{BASIC.II} holds. By the above discussion, this event holds with probability $1- o(1)$ as $x\to \infty$, and so this probability is at least $1/2$ provided that $x$ is large enough in terms of $\delta$, $M$, $K$, and $\varepsilon$. From now, we fix a $b$ mod $P(z)$ such that it is so, and thus all of our random sets and weights become deterministic. We see that, for each $\nu\in \mathcal{N}$, $\mathcal{R}'_{\nu}$ and $\mathcal{R}''_{\nu}$ are non-empty subsets of $\mathcal{Q}'_{\nu}$ and $\mathcal{Q}''_{\nu}$ respectively, and thus, by \eqref{SUM.LAMBDA.1} and \eqref{SUM.LAMBDA.2}, parts \textup{(i)} and \textup{(ii)} of Lemma \ref{L1} are verified. Now we verify parts \textup{(iii)} and \textup{(iv)} of Lemma \ref{L1}.

Fix $\nu\in \mathcal{N}$ and $i\in\{1,\ldots, \nu\}$. We set
\[
V'_{\nu, i} = (S'\cap [1,y]) \setminus \bigcup_{H\in \mathfrak{H}'}
(\mathcal{E}'_{H,\nu,i}\cup \mathcal{T}'_{H,\nu,i}).
\]By \eqref{L2:H_alpha}, we have
\begin{align*}
\#\Bigg(\bigcup_{H\in \mathfrak{H}'}
(\mathcal{E}'_{H,\nu,i}\cup \mathcal{T}'_{H,\nu,i})\Bigg)\ll \sigma y
\sum_{H\in \mathfrak{H}'}\frac{1}{H^{1+\varepsilon}}&\ll \frac{\sigma y}{(\log x)^{\delta (1+\varepsilon)}}\\
&\ll \frac{x}{(\log x)^{1+\delta \varepsilon}}< \frac{\rho x}{10 B^{2} \log x},
\end{align*}verifying \eqref{V.1.DOPOLNENIYE}.

We fix arbitrary $n\in V'_{\nu, i}$. For such $n$, the inequalities  \eqref{E.1.DEF} and \eqref{T.1.DEF} both fail, and therefore for each $H\in \mathfrak{H}'$,
\begin{align*}
\sum_{q\in \mathcal{R}'_{H,\nu}}\sum_{h\leq K H}\lambda '(H; q, n-\alpha_{q,i}-qh)&=
\bigg(1+ O\bigg(\frac{1}{H^{1+\varepsilon}}\bigg)\bigg) \frac{\#\mathcal{Q}_{H,\nu}[KH]}{\sigma_2}\\
&=\bigg(1+ O\bigg(\frac{1}{(\log x)^{\delta(1+\varepsilon)}}\bigg)\bigg) \frac{\#\mathcal{Q}_{H,\nu}[KH]}{\sigma_2}.
\end{align*}Summing over all $H\in \mathfrak{H}'$, we have
\begin{align*}
\sum_{q\in \mathcal{R}'_{\nu}}\sum_{h\leq KH_{q}}\lambda '(H; q, n-\alpha_{q, i}-qh)&=
\sum_{H\in \mathfrak{H}'}\sum_{q\in \mathcal{R}'_{H,\nu}}\sum_{h\leq KH_{q}}\lambda'(H; q, n-\alpha_{q, i}-qh)\\
&=\bigg(1+O\bigg(\frac{1}{(\log x)^{\delta (1+\varepsilon)}}\bigg)\bigg)C'_{\nu} (K+2)y,
\end{align*}where
\[
C'_{\nu}= \frac{1}{(K+2)y}\sum_{H\in \mathfrak{H}'}\frac{\# \mathcal{Q}_{H,\nu} [KH]}{\sigma_{2}}.
\] Note that $C'_{\nu}$ depends on $x$, $K$, $M$, $\xi$, $\delta$, and $\nu$, but not on $n$ and $i$. Since
\[
[KH]=KH\Big(1+ O\Big(\frac{1}{H}\Big)\Big) = KH\Big(1+ O\Big(\frac{1}{(\log x)^{\delta}}\Big)\Big),
\] we get, using \eqref{QH.NU.ASYMPT} and \eqref{SIGMA.2},
\[
C'_{\nu}\sim \frac{\rho_{\nu}K (1-1/\xi)}{(K+2)M} \sum_{H\in\mathfrak{H}'}\frac{1}{\log H}.
\]Recalling the definition of $\mathfrak{H}'$, we see that
\[
C'_{\nu}\sim \frac{\rho_{\nu}K (1-1/\xi)}{2(K+2)M \log \xi} \sum_{j} \frac{1}{j},
\]where $j$ runs over the interval
\[
\frac{\delta \log\log x}{2 \log \xi}(1+o(1)) \leq j \leq \frac{\log\log x}{4\log \xi} (1+o(1)).
\]We thus obtain
\[
C'_{\nu}\sim \frac{\rho_{\nu} K}{2(K+2) M} \frac{1-1/\xi}{\log \xi} \log \Big(\frac{1}{2\delta}\Big).
\]

Similarly, for fixed $\nu\in \mathcal{N}$ and $i\in\{1,\ldots, \nu\}$, we set
\[
V''_{\nu, i} = (S''\cap [-y,-1]) \setminus \bigcup_{H\in \mathfrak{H}''}
(\mathcal{E}''_{H,\nu,i}\cup \mathcal{T}''_{H,\nu,i}).
\]By \eqref{L2:H_alpha}, we have
\begin{align*}
\#\Bigg(\bigcup_{H\in \mathfrak{H}''}
(\mathcal{E}''_{H,\nu,i}\cup \mathcal{T}''_{H,\nu,i})\Bigg)\ll \sigma y
\sum_{H\in \mathfrak{H}''}\frac{1}{H^{1+\varepsilon}}&\ll \frac{\sigma y}{(\log x)^{\delta (1+\varepsilon)}}\\
&\ll \frac{x}{(\log x)^{1+\delta \varepsilon}}< \frac{\rho x}{10 B^{2} \log x},
\end{align*}verifying \eqref{V.2.DOPOLNENIYE}.

We fix arbitrary $n\in V''_{\nu, i}$. For such $n$, the inequalities  \eqref{E.2.DEF} and \eqref{T.2.DEF} both fail, and therefore for each $H\in \mathfrak{H}''$,
\begin{align*}
\sum_{q\in \mathcal{R}''_{H,\nu}}\sum_{h\leq K H}\lambda ''(H; q, n-\alpha_{q,i}+qh)&=
\bigg(1+ O\bigg(\frac{1}{H^{1+\varepsilon}}\bigg)\bigg) \frac{\#\mathcal{Q}_{H,\nu}[KH]}{\sigma_2}\\
&=\bigg(1+ O\bigg(\frac{1}{(\log x)^{\delta(1+\varepsilon)}}\bigg)\bigg) \frac{\#\mathcal{Q}_{H,\nu}[KH]}{\sigma_2}.
\end{align*}Summing over all $H\in \mathfrak{H}''$, we have
\begin{align*}
\sum_{q\in \mathcal{R}''_{\nu}}\sum_{h\leq KH_{q}}\lambda ''(H; q, n-\alpha_{q, i}+qh)&=
\sum_{H\in \mathfrak{H}''}\sum_{q\in \mathcal{R}''_{H,\nu}}\sum_{h\leq KH_{q}}\lambda''(H; q, n-\alpha_{q, i}+qh)\\
&=\bigg(1+O\bigg(\frac{1}{(\log x)^{\delta (1+\varepsilon)}}\bigg)\bigg)C''_{\nu} (K+2)y,
\end{align*}where
\[
C''_{\nu}= \frac{1}{(K+2)y}\sum_{H\in \mathfrak{H}''}\frac{\# \mathcal{Q}_{H,\nu} [KH]}{\sigma_{2}}.
\] Note that $C''_{\nu}$ depends on $x$, $K$, $M$, $\xi$, $\delta$, and $\nu$, but not on $n$ and $i$. Arguing as above, we obtain
\[
C''_{\nu}\sim \frac{\rho_{\nu} K}{2(K+2) M} \frac{1-1/\xi}{\log \xi} \log \Big(\frac{1}{2\delta}\Big).
\]

Since $\delta < C(1/2)$, we see from \eqref{C.RHO.INEQUALITY} that
\[
C'_{\nu}\geq 10^{2\delta}\rho_{\nu}  \quad\text{and}\quad C''_{\nu} \geq 10^{2\delta}\rho_{\nu}
 \] provided that $(M-6)$ and $(\xi - 1)$ are sufficiently small in terms of $\delta$, $K$ is sufficiently large in terms of $\delta$, $0 < \varepsilon < (M-6)/7$, and $x$ is sufficiently large in terms of $\delta$, $M$, $\xi$, $K$, $\varepsilon$. Also $C'_{\nu} \leq 100 \rho_{\nu}$ and $C''_{\nu} \leq 100 \rho_{\nu}$  due to $\delta > 10^{-6}$.

So parts \textup{(iii)} and \textup{(iv)} of Lemma \ref{L1} are verified. Lemma \ref{L1} is proved.

\section{Acknowledgements}

This work is an output of a research project (HSE-BR-2025-024) implemented as part of the Basic Research Program at HSE University.


\begin{thebibliography}{99}
  \bibitem{Bateman.Horn}
    P.\,T.~Bateman and R.\,A.~Horn. A heuristic asymptotic formula concerning the distribution of prime numbers. \emph{Math. Comp.} \textbf{16} (1962), 363--367.

    \bibitem{Bateman.Horn2}
    P.\,T.~Bateman and R.\,A.~Horn. Primes represented by irreducible polynomials in one variable. \emph{Proc. Sympos. Pure Math.} \textbf{8} (1965), 119--135.

  \bibitem{Bouniakowsky}
  V.~Bouniakowsky. Nouveaux th\'{e}or\`{e}mes relatifs \'{a} la distinction des nombres premiers et \'{a} la d'ecomposition des entiers en facteurs. \emph{M\'{e}m. Acad. Sc. St. P\'{e}tersbourg} \textbf{6} (1857), 305--329.


  \bibitem{Cojocaru.Murty}
  A.\,C.~Cojocaru and M.\,R.~Murty. \emph{An introduction to Sieve Methods and their Applications.} Cambridge University Press, 2006.

  \bibitem{Gabdullin.Radomskii}
  M.\,R.~ Gabdullin and A.\,O.~Radomskii. Prime avoiding numbers form a basis of order 2. \emph{Sb. Math.} \textbf{215} (2024), no. 5, 612--633.



  \bibitem{Ford.Gabdullin}
  K.~Ford and M.\,R.~Gabdullin. Long strings of consecutive composite values of polynomials. preprint, https://arxiv.org/abs/2310.20449

  \bibitem{FGKMT.LARGE}
  K.~Ford, B.~Green, S.~Konyagin, J.~Maynard, and T.~Tao. Long gaps between primes. \emph{J. Amer. Math. Soc.} \textbf{31} (2018), no. 1, 65--105.


  \bibitem{FKMPT}
  K.~Ford, S.~Konyagin, J.~Maynard, C.~Pomerance, and T.~Tao. Long gaps in sieved sets. \emph{J. European Math. Soc.} \textbf{23} (2021), no. 2, 667--700.

  \bibitem{Lagarias.Odlyzko}
  J.\,C.~Lagarias and A.\,M.~ Odlyzko. Effective versions of the Chebotarev density theorem. \emph{Algebraic number fields: $L$-functions and Galois properties} (Proc. Sympos., Univ. Durham, Durham, 1975), Academic Press, 1977, pp. 409--464.

  \bibitem{Landau}
  E.~Landau. Neuer Beweis des Primzahlsatzes und Beweis des Primidealsatzes. \emph{Mathematische Annalen} \textbf{56} (1903), no. 4, 645--670.

  \bibitem{Polya}
  G.~P\'{o}lya. \"{U}ber ganzwertige ganze Funktionen. \emph{Rend. Circ. Mat. Palermo} \textbf{40} (1915), 1--16.



  \bibitem{Chebotarev}
  N.~Tschebotareff. Die Bestimmung der Dichtigkeit einer Menge von Primzahlen, welche zu einer gegebenen Substitutionsklasse geh\"{o}ren. \emph{Mathematische Annalen} \textbf{95} (1926), 191--228.










  \end{thebibliography}
\end{document}